\def\d{\delta}
\newcommand{\N}{\mathbb{N}}
\newcommand{\R}{\mathbb{R}}
\newcommand{\e}{\epsilon}
\renewcommand{\l}{\lambda} \renewcommand{\O}{\Omega}
 \renewcommand{\to}{\rightarrow}
\newcommand{\leqs}{\leqslant}
\newcommand{\geqs}{\geqslant}
 \newcommand{\vs}{\vspace{3mm}}
\def\no{\noindent}
\newcommand{\normeq}{\lhd}
\newcommand{\imod}[1]{\allowbreak\mkern4mu({\operator@font mod}\,\,#1)}
\newtheorem{theorem}{Theorem}
\newtheorem*{conj*}{Conjecture}
\newtheorem{propn}[theorem]{Proposition}
\newtheorem{corol}[theorem]{Corollary}
\newtheorem{thm}{Theorem}[section]
\newtheorem{prop}[thm]{Proposition}
\newtheorem{lem}[thm]{Lemma}
\newtheorem{cor}[thm]{Corollary}
\theoremstyle{definition}
\newtheorem{rem}[thm]{Remark}
\begin{document}

\author{Timothy C. Burness}
 \address{T.C. Burness, School of Mathematics, University of Bristol, Bristol BS8 1TW, UK}
 \email{t.burness@bristol.ac.uk}

 \author{Martin W. Liebeck}
\address{M.W. Liebeck, Department of Mathematics,
    Imperial College, London SW7 2BZ, UK}
\email{m.liebeck@imperial.ac.uk}

\author{Aner Shalev}
\address{A. Shalev, Institute of Mathematics, Hebrew University, Jerusalem 91904, Israel}
\email{shalev@math.huji.ac.il}

\title{The depth of a finite simple group}

\begin{abstract}
We introduce the notion of the depth of a finite group $G$, defined as the minimal length of an unrefinable chain of subgroups from $G$ to the trivial subgroup. In this paper we investigate the depth of (non-abelian) finite simple groups. We determine the simple groups of minimal depth, and show, somewhat surprisingly, that alternating groups have bounded depth. We also establish general upper bounds on the depth of simple groups of Lie type, and study the relation between the depth and the much studied notion of the length of simple groups. The proofs of
our main theorems depend (among other tools) on a deep number-theoretic result, namely, Helfgott's recent solution of the ternary Goldbach conjecture.
\end{abstract}

\footnotetext{
The first and third authors acknowledge the hospitality and support of Imperial College, London, while
part of this work was carried out.
The third author acknowledges the support of ISF grant 1117/13 and the Vinik chair of mathematics which he holds.}

\subjclass[2010]{Primary 20E32, 20E15; Secondary 20E28}
\date{\today}
\maketitle

\section{Introduction}\label{s:intro}

An {\it unrefinable} chain of length $t$ of a finite group $G$ is a chain of subgroups
\begin{equation}\label{e:chain}
G = G_0 > G_1 > \cdots > G_{t-1} > G_t=1,
\end{equation}
where each $G_{i}$ is a maximal subgroup of $G_{i-1}$. We define the \emph{depth} of $G$, denoted by $\l(G)$, to be the minimal length of an unrefinable chain. For example, if $G$ is a cyclic group of order $n \geqs 2$, then $\l(G) = \O(n)$, the number of prime divisors of $n$ (counting multiplicities). In particular, $\l(G)=1$ if and only if $G$ has prime order.

In this paper we are interested in the depth of finite simple groups (by which we mean non-abelian finite simple
groups). For such a group $G$, it is easy to show that $\l(G) \geqs 3$ (see Corollary \ref{d3}). In fact, this lower bound is best possible, and our first theorem determines the simple groups of minimal depth.

\begin{theorem}\label{t:main1}
Let $G$ be a finite simple group. Then $\l(G)= 3$ if and only if $G$ is one of the groups recorded in Table \ref{tab:main}.
\end{theorem}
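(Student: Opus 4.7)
My plan is to translate the condition $\l(G)=3$ into a structural condition on the maximal subgroups of $G$, and then verify this condition case by case against the classification of maximal subgroups of the finite simple groups. An unrefinable chain of length three has the shape $G>M>H>1$: here $M$ is maximal in $G$, $H$ is maximal in $M$, and $H$ is forced to have prime order (since $1$ is maximal in $H$). Combined with Corollary~\ref{d3}, this gives the reduction: $\l(G)=3$ if and only if $G$ admits a maximal subgroup $M$ that itself possesses a maximal subgroup of prime order.

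Next I would describe in abstract the possible groups $M$. If $P\leqs M$ is a maximal subgroup of prime order $p$, then $P$ is a Sylow $p$-subgroup of $M$, and either $P\normeq M$ (forcing $|M|\in\{p^{2},pq\}$ for primes $p,q$) or $N_M(P)=P$. In the latter case Burnside's normal $p$-complement theorem yields $M=K\rtimes C_{p}$, and the maximality of $P$ is equivalent to $K$ having no proper, non-trivial $C_p$-invariant subgroup. Examining the standard characteristic subgroups of $K$ (solvable radical, Fitting subgroup, socle) shows that $K$ is an elementary abelian $q$-group on which $C_p$ acts faithfully and irreducibly, so $|M|=pq^d$ where $d$ is the multiplicative order of $q$ modulo $p$. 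In particular, $M$ is always solvable of a very restricted shape.

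With this criterion in place I would proceed family by family. For the sporadic groups the answer is read directly from the \textsc{Atlas}. For $G=A_n$ the maximal subgroups are known (small $n$ by hand, large $n$ by the O'Nan--Scott theorem), and only a short explicit list of $n$ survives the structural test. For the classical groups I would invoke Aschbacher's subgroup structure theorem. The productive case is rank one: the Borel subgroup of $L_2(q)$ has shape $[q]{:}\tfrac{q-1}{d}$ and admits a maximal subgroup of prime order precisely when $(q-1)/d$ is prime, while the normalisers of split and non-split tori yield analogous conditions on $(q\pm 1)/d$. Parallel analyses apply to $U_3(q)$ and $\mathrm{Sz}(q)$. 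Higher-rank classical groups, and exceptional groups of Lie type handled via the Liebeck--Seitz classification of maximal subgroups, have maximal subgroups that are generically too large or too multiply composed to satisfy the criterion.

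The main obstacle is not producing the examples but proving completeness: one must verify that every maximal subgroup of every simple group \emph{outside} Table~\ref{tab:main} fails the structural test. For the Aschbacher classes this reduces to a case-by-case argument, each instance ending in a clean structural or arithmetic obstruction (too many composition factors in a $\mathcal{C}_i$- or $\mathcal{S}$-subgroup, too large a solvable radical, or failure of a primality condition on $(q\pm 1)/d$). The simple groups that survive are exactly those recorded in Table~\ref{tab:main}: a finite list of small groups together with the low-rank Lie-type families governed by primality conditions on $q\pm 1$.
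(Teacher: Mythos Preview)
Your structural reduction matches the paper's: both arrive at $M$ being a Frobenius group $q^d{:}p$ with elementary abelian kernel, or (in the normal case) of order a product of two primes, which Lemma~\ref{soph} then forces to be non-abelian, hence dihedral. (The paper reaches the Frobenius structure via Lemma~\ref{frob}---the coset action of $M$ on $M/P$ is Frobenius, so Thompson's theorem makes the kernel nilpotent---rather than via Burnside's transfer and an inspection of characteristic subgroups of $K$.) The real divergence is in the enumeration. You propose a family-by-family sweep through the Aschbacher classes, O'Nan--Scott, and Liebeck--Seitz; the paper instead splits on the parity of $|M|$. When $|M|$ is odd, the entire list is read off from the Liebeck--Saxl classification \cite{LSax} of odd-order maximal subgroups of simple groups. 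When $|M|$ is even, the Frobenius structure forces $M\cong 2^k{:}r$ or $M\cong D_{2s}$, and the simple groups with such a maximal subgroup are then enumerated directly from \cite{KL,CLSS,At}. This parity split, with the citation of \cite{LSax}, is the decisive shortcut your plan lacks; it replaces the ``main obstacle'' you identify (proving completeness across all Aschbacher classes) with essentially a single reference. Incidentally, your expectation that $A_n$ contributes only ``a short explicit list of $n$'' is mistaken: Table~\ref{tab:main} contains $A_p$ for every prime $p$ with $(p-1)/2$ prime and $p\notin\{7,11,23\}$, a conjecturally infinite family, and the relevant maximal subgroup $p{:}\frac{p-1}{2}$ has odd order---precisely the situation handled by \cite{LSax}.
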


\renewcommand{\arraystretch}{1.06}
\begin{table}[h]
$$\begin{array}{lll} \hline\hline
G && \hspace{5mm} \mbox{Conditions} \\ \hline
A_p && \hspace{5mm} \mbox{$p$ and $(p-1)/2$ prime, $p \not\in \{7,11,23\}$} \\
{\rm L}_2(q) && \left\{\begin{array}{l}
\mbox{$(q+1)/(2,q-1)$ or $(q-1)/(2,q-1)$ prime, $q \ne 9$; or} \\
\mbox{$q$ prime and $q \equiv \pm 3, \pm 13 \imod{40}$; or} \\
\mbox{$q=3^k$ with $k \geqs 3$ prime}
\end{array}\right. \\
{\rm L}_n^{\e}(q) && \hspace{5mm} \mbox{$n$ and $\frac{q^n-\e}{(q-\e)\,(n,q-\e)}$ both prime, $n \geqs 3$ and} \\
&& \hspace{5mm} (n,q,\e) \ne (3,4,+), (3,3,-), (3,5,-), (5,2,-) \\
{}^2B_2(q) & & \hspace{5mm} \mbox{$q-1$ prime} \\
{\rm M}_{23},  \; \mathbb{B} && \\ \hline \hline
\end{array}$$
\caption{The simple groups $G$ with $\l(G)=3$}
\label{tab:main}
\end{table}
\renewcommand{\arraystretch}{1}

In particular, there are infinitely many simple groups with depth $3$.

Next we turn our attention to upper bounds. Firstly, using Helfgott's solution of the ternary Goldbach conjecture (see \cite{H}, as well as Vinogradov's classical result \cite{Vin} for sufficiently large numbers), we show that alternating groups have bounded depth.

\begin{theorem}\label{t:main2}
We have $\l(A_n) \leqs 23$ for all $n$.
\end{theorem}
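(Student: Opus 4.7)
Plan: The strategy is to use Helfgott's theorem to split $n$ into a bounded number of prime summands and then construct a short unrefinable chain in $A_n$ descending via intransitive maximal subgroups to a direct product of alternating groups of prime degree. Helfgott's theorem asserts that every odd integer $n \geqs 7$ can be written as $n = p_1 + p_2 + p_3$ with each $p_i$ prime. For even $n \geqs 10$, we may write $n = 3 + (n-3)$ with $n-3$ odd $\geqs 7$ and apply Helfgott to obtain $n = 3 + p_1 + p_2 + p_3$. For small values of $n$ below some explicit threshold, the bound is verified by direct construction of an unrefinable chain.

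First I would iterate intransitive maximal subgroups to descend from $A_n$ to a subgroup of the form $(S_{p_1} \times S_{p_2} \times S_{p_3}) \cap A_n$, with an extra $S_3$ factor in the even case. Each step uses that $(S_k \times S_{n-k}) \cap A_n$ is maximal in $A_n$ when $k \neq n/2$, and analogously for the partial-product subgroups at later stages; the parities of the $p_i$, together with care in ordering the summands, ensure that maximality is preserved throughout (with small adjustments via wreath-product maximal subgroups if two $p_i$ happen to coincide). Then, using the fact that $A_a \times A_b$ is an index-$2$, hence maximal, subgroup of $(S_a \times S_b) \cap A_{a+b}$, I descend further in at most two more steps to reach $A_{p_1} \times A_{p_2} \times A_{p_3}$. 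This produces an unrefinable chain of length bounded by a small constant (around $6$) from $A_n$ to the product $A_{p_1} \times A_{p_2} \times A_{p_3}$.

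Next I would descend through the direct product factor by factor, using the easy inequality $\l(G \times H) \leqs \l(G) + \l(H)$. The resulting total chain length is bounded by a constant plus $\l(A_{p_1}) + \l(A_{p_2}) + \l(A_{p_3})$, so to reach the bound $23$ one needs a uniform estimate $\l(A_p) \leqs C$ for every prime $p$, with $C$ small enough that the overall count stays at or below $23$.

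The hard part will be establishing such a uniform bound on $\l(A_p)$ for prime $p$. For primes $p$ with $(p-1)/2$ also prime and $p \notin \{7, 11, 23\}$, Theorem \ref{t:main1} gives $\l(A_p) = 3$, but for a generic prime no such control is immediate. A natural route is to use the maximal subgroup $A_{p-1} < A_p$ to reduce to an alternating group of composite even degree, and then apply the Helfgott-based reduction again; closing the recursion to yield a constant bound requires careful bookkeeping, likely combining explicit verification for primes below a threshold with a separate induction argument for larger primes that exploits the factorization of $p - 1$. With such an auxiliary bound in hand, the final estimate $\l(A_n) \leqs 23$ follows by summing the chain lengths in the reduction.
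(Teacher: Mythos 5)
Your reduction of $A_n$ to a product of three alternating groups via intransitive maximal subgroups is sound and matches the paper's strategy in outline, but you have aimed the Goldbach decomposition at the wrong target, and this creates a genuine gap. You descend to $A_{p_1}\times A_{p_2}\times A_{p_3}$ with the $p_i$ prime, and then need a uniform constant bound on $\l(A_p)$ for all primes $p$ --- which, as you concede, you do not have. Your fallback, passing to $A_{p-1}<A_p$ and applying the three-primes decomposition again, does not close: each application replaces one alternating group by three smaller ones plus an additive constant, so the recursion tree branches with unbounded depth and the resulting bound grows with the number of levels rather than stabilising at a constant. Theorem \ref{t:main1} only covers primes $p$ with $(p-1)/2$ prime, so it cannot rescue the general case.

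The paper's proof avoids this entirely by writing $n-3-\d=p_1+p_2+p_3$, with $\d\in\{0,1\}$ chosen so that this quantity is odd, so that the target of the descent is $A_{p_1+1}\times A_{p_2+1}\times A_{p_3+1}$: alternating groups of degree \emph{one more than} a prime. For such groups one has the uniform bound $\l(A_{p+1})\leqs 5$ (Corollary \ref{c:1}), because ${\rm L}_2(p)$ is a maximal subgroup of $A_{p+1}$ for $p\notin\{7,11,23\}$ and $\l({\rm L}_2(p))\leqs 4$ via its maximal subgroup $A_4$, $S_4$ or $A_5$ (Lemma \ref{lap1}); the three exceptional primes are checked directly. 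This is the one idea your argument is missing; with it, the descent of length at most $8$ from $A_n$ to the product gives $8+3\cdot 5=23$. Without it, or some other uniform bound on the depth of prime-degree alternating groups, your proof does not terminate.
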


This is in stark contrast to the situation for groups of Lie type (see Proposition \ref{p:l2q} for the exact depth of ${\rm L}_{2}(p^k)$ for a prime $p$ and odd integer $k$).

\begin{theorem}\label{t:main3}
For any $n \in \mathbb{N}$, there exists a prime power $q$ such that $\l({\rm L}_{2}(q))>n$.
\end{theorem}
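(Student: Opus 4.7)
My plan is to focus on the family $q = 2^k$ with $k$ an odd prime, where by Proposition~\ref{p:l2q} the depth of ${\rm L}_2(q)$ admits a clean arithmetic formula.

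\emph{Structural step.} For $q = 2^k$ with $k$ an odd prime, Dickson's classification gives that the only maximal subgroups of $G = {\rm L}_2(2^k)$ are the Borel subgroup $B$ of order $2^k(2^k-1)$ and the two dihedral torus normalisers $D_{2(2^k-1)}$ and $D_{2(2^k+1)}$. The subfield subgroup ${\rm L}_2(2) \cong S_3$ is not maximal, as it lies inside $D_{2(2^k+1)}$ (because $3 \mid 2^k+1$ for $k$ odd); and the sporadic groups $A_4, S_4, A_5$ are not subgroups of $G$ at all for $k \geq 3$ odd, by simple order considerations (for instance $12 \nmid 2^k(2^k-1)$ and $12 \nmid 2(2^k \pm 1)$ for $k$ odd, so $A_4$ fits into no maximal subgroup).

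A routine induction on $\O$ gives $\l(D_{2m}) = \O(m) + 1$, and the analogous argument --- observing that the complement $C_{2^k-1}$ is maximal in $B$, as it acts irreducibly on the elementary abelian normal subgroup $(C_2)^k$ --- yields $\l(B) = \O(2^k-1) + 1$ via the unrefinable chain $B > C_{2^k-1} > \cdots > 1$. Combining these, and using $\l(G) = 1 + \min_{M} \l(M)$ over the maximal subgroups $M$ of $G$, one obtains
\[
\l\bigl({\rm L}_2(2^k)\bigr) \;=\; 2 + \min\bigl(\O(2^k-1),\, \O(2^k+1)\bigr),
\]
which is the case $p=2$ of Proposition~\ref{p:l2q}.

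\emph{Number-theoretic step.} The theorem now reduces to the claim that $\min\bigl(\O(2^k-1),\, \O(2^k+1)\bigr)$ is unbounded as $k$ ranges over odd primes. Since $k$ is prime, every prime divisor of $2^k-1$ is a primitive prime divisor (with multiplicative order of $2$ equal to $k$), and every prime divisor of $2^k+1$ has order $2k$; the content of the claim is therefore that both cyclotomic values $\Phi_k(2)$ and $\Phi_{2k}(2)$ admit arbitrarily many prime factors for some odd prime $k$. This can be extracted from known estimates on the normal order of $\O$ applied to values of cyclotomic polynomials at $2$.

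The main obstacle is this last number-theoretic step: the primality of $k$ precludes the most direct congruence-based constructions (any condition $d \mid k$ with $d > 1$ forces $d = k$), and one must ensure that the ``small-$\O$'' exceptions --- for instance $k$ such that $2^k - 1$ is a Mersenne prime --- do not exhaust all primes $k$. Once a suitable prime $k$ is secured, the displayed formula delivers $\l\bigl({\rm L}_2(2^k)\bigr) > n$, completing the proof.
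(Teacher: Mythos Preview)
Your structural step is fine and recovers the $p=2$, $k$ prime case of Proposition~\ref{p:l2q} correctly. The gap is in the number-theoretic step: you have not actually proved that $\min\bigl(\O(2^k-1),\,\O(2^k+1)\bigr)$ is unbounded as $k$ ranges over odd \emph{primes}, and your appeal to ``known estimates on the normal order of $\O$ applied to values of cyclotomic polynomials at $2$'' does not settle this. Normal-order results give information on average over all $n$, not along the thin set of prime exponents; in particular, they do not preclude the possibility that for every prime $k$ one of $2^k-1$, $2^k+1$ has a bounded number of prime factors. You yourself flag this as the main obstacle and then assume it away in the final sentence (``once a suitable prime $k$ is secured\ldots''). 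As far as I am aware, the statement you need is open.

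The paper's proof avoids this difficulty entirely by \emph{not} insisting that $k$ be prime. It takes $k = p_1\cdots p_n$ a product of $n$ distinct odd primes; then $k$ is odd, Proposition~\ref{p:l2q} applies, and already the term $\O(k)=n$ in the formula
\[
\l({\rm L}_2(2^k)) = \O(k) + 1 + \min\{\O(2^r\pm 1) : r \in \pi(k)\} \geqs \O(k)+2 = n+2
\]
does all the work, with no control on the cyclotomic factor needed. The point you missed is that allowing $k$ composite lets the subfield-subgroup chain carry the depth, rather than forcing the torus normalisers to do so.
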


Next, applying Theorem \ref{t:main2} above and other tools, we establish a general upper bound on the depth of finite simple groups of Lie type.

\begin{theorem}\label{t:main4}
Let $G = G(q)$ be a simple group of Lie type, where $q=p^k$ for a prime $p$. Then either
\[
\l(G) \leqs 3\O(k) + 36,
\]
or one of the following holds:
\begin{itemize}\addtolength{\itemsep}{0.2\baselineskip}
\item[{\rm (i)}] $G = {\rm L}_{2}(2^k)$ or ${}^2B_2(2^k)$ and
\[
\l(G) \leqs \O(k) + 1+\min\{\O(2^r - 1) : r \in \pi(k)\},
\]
where $\pi(k)$ is the set of prime divisors of $k$.
\item[{\rm (ii)}] $G = {\rm U}_{n}(2^k)$, $n$ is odd, $k$ is even and
\[
\l(G) \leqs 3\O(k)+2\O(2^{2^a}+1)+35,
\]
where $k=2^ab$ with $b$ odd.
\end{itemize}
\end{theorem}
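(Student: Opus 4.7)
The plan is a two-stage reduction: first I descend from $G(p^k)$ to $G(p)$ through a tower of subfield subgroups, and then I bound $\l(G(p))$ by an absolute constant using embedded alternating subgroups together with Theorem~\ref{t:main2}. For each prime $r \mid k$, Aschbacher's class $\mathcal{C}_5$ provides a maximal subgroup of $G(p^k)$ of shape $N_G(G(p^{k/r})) \cong G(p^{k/r}).A$, where $A$ is a small cyclic field-automorphism/diagonal factor. Refining $G(p^k) > N_G(G(p^{k/r})) > G(p^{k/r})$ into an unrefinable chain costs at most $3$ maximal steps per prime factor of $k$, and iterating through $\O(k)$ such reductions yields $\l(G(p^k)) \leqs \l(G(p)) + 3\O(k)$ in the generic case.

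For the base step $G = G(p)$ of classical type with natural dimension $n$, I would use a maximal subgroup stabilising a direct-sum decomposition of the natural module into lines (or an analogous wreath-product configuration), which has shape close to $(\text{diagonal torus}).S_n$. Quotienting by the bounded-depth torus reduces to an alternating section whose depth is at most $23$ by Theorem~\ref{t:main2}, and accounting for a bounded number of intermediate maximal steps yields $\l(G(p)) \leqs 36$. For the finitely many exceptional Lie families one argues case by case via parabolic Levis or small classical subsystems to obtain the same uniform bound; combined with Stage~1 this gives the generic conclusion $\l(G) \leqs 3\O(k) + 36$.

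The exceptions (i) and (ii) arise precisely where this route breaks down. In (i), the rank-one groups ${\rm L}_2(2^k)$ and ${}^2B_2(2^k)$ admit no useful alternating subgroup; instead, after subfield descent to $G(2^r)$ for some prime $r \mid k$, one routes through the Borel $B = U \rtimes T$ whose elementary abelian unipotent radical $U$ gives one step and whose cyclic torus $T$ of order $2^r-1$ (respectively a divisor thereof) contributes $\O(2^r-1)$. Taking the best choice of $r \in \pi(k)$ produces the bound in (i). In (ii), when $G = {\rm U}_n(2^k)$ with $n$ odd and $k = 2^a b$ with $b$ odd, the descent in characteristic $2$ across the even factor of $k$ interacts with the unitary Galois twist: the subfield chain no longer mirrors the untwisted case, and the cyclotomic factor $2^{2^a}+1$ appears as the order of the anisotropic torus one must traverse once the odd part of $k$ has been stripped away.

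The main obstacle is establishing the uniform base-case bound $\l(G(p)) \leqs 36$. This requires a careful case analysis across every family of simple groups of Lie type over $\mathbb{F}_p$, including the small-rank and small-characteristic configurations where $\mathcal{C}_5$-maximality fails or where the alternating-subgroup route is unavailable; one must also bound uniformly the overhead in reducing from the wreath-product maximal subgroup down to its alternating section. Case (ii) is the most delicate, since it is the unique configuration where the interaction between the Galois twist and subfield descent genuinely forces a different bound rather than a cosmetic adjustment, and verifying the exact shape $2\O(2^{2^a}+1) + 35$ demands tracking the normalisers of anisotropic maximal tori through the even part of the Frobenius twist.
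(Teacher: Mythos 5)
Your Stage 1 (subfield descent at a cost of at most $3\O(k)$) agrees with the paper, but Stage 2 contains a genuine gap: the uniform bound $\l(G(p))\leqs 36$ cannot be obtained from the stabiliser of a decomposition of the natural module into lines. That subgroup has shape $T.S_n$ with $T$ a maximal torus, and your phrase ``bounded-depth torus'' is false: for ${\rm L}_n(p)$ one has $T\cong C_{p-1}^{\,n-1}$, the deleted permutation module over $\Z/(p-1)\Z$, which has roughly $2\O(p-1)$ composition factors as an $S_n$-module, so passing from $T.S_n$ down to a complement $S_n$ by maximal steps costs an unbounded (in $p$) number of steps; indeed $T.S_n$ has quotients of the form $C_m\times S_n$ with $m$ a large divisor of $p-1$, whose depth is already at least $\O(m)$ by Lemma \ref{norm}(ii). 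The paper closes the base case by two devices that your sketch does not supply. First, for $\O_n(p)$ with $np$ odd and for ${\rm Sp}_n(2)$, $\O_n^{\pm}(2)$, it proves (Lemma \ref{max}, via the fully deleted permutation module and the Guralnick--Saxl theorem \cite{GS}) that $A_{n+\d}$ or $S_{n+\d}$ is itself a \emph{maximal} subgroup, so Theorem \ref{t:main2} applies after one step. Second, for ${\rm PSp}_{2n}(p)$ with $p$ odd it takes the imprimitive subgroup $({\rm Sp}_2(p)\wr S_n)/Z$ and descends \emph{diagonally inside the base group} via $(2.A_5)\wr S_n>(2.A_4)\wr S_n>C_6\wr S_n$, exploiting the fact that ${\rm Sp}_2(p)$ reaches the bounded group $C_6$ in at most three maximal steps --- precisely the property a torus $C_{p-1}$ lacks --- and then strips $C_6\wr S_n$ down to $2.S_n$ in at most five further steps. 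The remaining families (${\rm L}_n(p)$, ${\rm P\O}_{2n}^{\pm}(p)$, unitary, and exceptional groups) are reduced to these two configurations by explicit chains of maximal overgroups; without Lemma \ref{max} or the $C_6\wr S_n$ device your base case does not close.

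Two smaller discrepancies. In case (i), routing through the Borel subgroup of ${}^2B_2(2^r)$ gives $\l({}^2B_2(2^r))\leqs\O(2^r-1)+3$, since the unipotent radical $q^{1+1}$ costs two steps rather than one; this yields $\O(k)+2+\min\{\O(2^r-1)\}$, one worse than the stated bound, which the paper obtains by descending instead through the maximal dihedral subgroup $D_{2(2^r-1)}$. In case (ii), the term $2\O(2^{2^a}+1)$ actually arises from the maximal reducible subgroup $c.{\rm PGU}_{n-1}(2^{2^a})$ of ${\rm U}_n(2^{2^a})$ with $c$ dividing $2^{2^a}+1$ (a nonsingular-vector stabiliser, after which one lands in an even-dimensional unitary group and hence back in the generic case), not from traversing an anisotropic maximal torus.
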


Note that Proposition \ref{p:suz} determines the precise depth of the groups in case (i) in Theorem \ref{t:main4}.
Also Proposition \ref{p:l2q} gives the depth of ${\rm L}_2(p^k)$ for $k$ odd.
A detailed investigation of the depth of simple groups of Lie type will be presented  in a forthcoming paper.

Define a function $f_1: \N \to \R$ by
\[
f_1(k) = 3 \log_2 k + 2k/\log_2(2k) + 35.
\]
Applying Theorem \ref{t:main4} with some elementary number theory we obtain the following.

\begin{corol}\label{c:main5}
With the above notation we have $$\l(G(p^k)) < f_1(k).$$
\end{corol}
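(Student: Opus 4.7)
The plan is to apply Theorem~\ref{t:main4} directly and check, for each of its three possible conclusions, that the resulting upper bound on $\l(G)$ is strictly less than $f_1(k) = 3\log_2 k + 2k/\log_2(2k) + 35$. The only number-theoretic inputs I will need are the trivial bound $\Omega(m) \leqs \log_2 m$ and the classical fact that a prime divisor of $2^r - 1$ (with $r$ prime) is congruent to $1 \imod{r}$, while a prime divisor of $2^{2^a}+1$ is congruent to $1 \imod{2^{a+1}}$.

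For the generic conclusion $\l(G) \leqs 3\Omega(k) + 36$, I use $\Omega(k) \leqs \log_2 k$ together with the trivial estimate $2k/\log_2(2k) \geqs 2 > 1$ valid for every $k \geqs 1$; this immediately yields $\l(G) \leqs 3\log_2 k + 36 < f_1(k)$.

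In case (i), I take $r$ to be the smallest prime divisor of $k$, so in particular $r \leqs k$. Prime divisors of $2^r - 1$ are then at least $2r+1$, so $\Omega(2^r-1) \leqs \log(2^r-1)/\log(2r+1) < r/\log_2(2r)$. Since $r/\log_2(2r)$ is non-decreasing in $r$, this is at most $k/\log_2(2k)$, and hence $\l(G) \leqs \log_2 k + 1 + k/\log_2(2k)$, comfortably below $f_1(k)$.

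Case (ii) is treated analogously with $2^{2^a}+1$. Since each of its prime divisors is at least $2^{a+1}+1$, we have $\Omega(2^{2^a}+1) \leqs \log_2(2^{2^a}+1)/\log_2(2^{a+1}+1)$, and a short Taylor-expansion calculation bounds this by $2^a/(a+1)$. Combining with $2^a \leqs k$, $a+1 \leqs \log_2(2k)$, and $3\Omega(k) \leqs 3\log_2 k$ gives $\l(G) \leqs f_1(k)$. The main obstacle is sharpening this to the strict inequality asserted in the corollary: the estimates are essentially tight when $k$ is a small power of $2$, most acutely at $k = 2$ with $G = \mathrm{U}_n(4)$, $n$ odd. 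The plan is to recover the missing strict unit of slack from the two strict inequalities $\log_2(2^{2^a}+1) < 2^a+1$ and $\log_2(2^{a+1}+1) > a+1$ (both consequences of $\log_2(1+x) > 0$ for $x > 0$) together with the integrality of $\Omega(2^{2^a}+1)$, and to verify any residual small exceptional values of $k$ by direct inspection.
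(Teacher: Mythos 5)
Your proposal follows the paper's proof almost exactly: the same case division according to the conclusions of Theorem \ref{t:main4}, the same trivial bound $\O(k) \leqs \log_2 k$, and the same congruence conditions on the prime divisors of $2^r-1$ and $2^{2^a}+1$ to control $\O(2^r-1)$ and $\O(2^{2^a}+1)$. Two small points in your case (ii) deserve attention. First, the bounds $2^a \leqs k$ and $a+1 \leqs \log_2(2k)$ cannot simply be ``combined'': the second goes the wrong way when it sits in a denominator (and only closes up when $b=1$). The correct step, used in the paper and by you in case (i), is the monotonicity of $x/\log_2 x$ applied at $x = 2^{a+1} \leqs 2k$, which gives $2^{a+1}/(a+1) \leqs 2k/\log_2(2k)$. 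Second, you are right that $k=2$ is the critical case, but your primary plan for recovering strictness there cannot succeed: for $a=1$ one has $2^{2^a}+1 = 2^{a+1}+1 = 5$, so $\O(5) = 1 = 2^a/(a+1)$ exactly, and with $b=1$ and $\O(2)=\log_2 2$ every inequality in the chain is an equality; the Taylor-expansion and integrality slack is identically zero and the argument delivers only $\l({\rm U}_n(4)) \leqs 40 = f_1(2)$. The strict inequality at $k=2$ must come from your fallback, a direct inspection of ${\rm U}_n(4)$ (for instance $\l({\rm U}_3(4)) = 3$ by Theorem \ref{t:main1}, and the recursive bounds in the proof of Theorem \ref{t:main4} are far from sharp for larger odd $n$). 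It is worth noting that the paper's own proof asserts the strict inequality $\O(2^{2^a}+1) < 2^a/(a+1)$, which fails at $a=1$, so it silently has the same boundary issue; your proposal is, if anything, more candid about it.
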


The depths of the sporadic simple groups are routine to compute, and are given in Lemma \ref{spor}.

The \emph{length} $l(G)$ of a finite group $G$ is defined to be the maximal length of a strictly descending chain of subgroups from $G$ to $1$. The length of simple groups has been the subject of numerous papers since the 1960s (see \cite{AST,Bab,CST, Har,Jan,SST, ST, ST2}, for example).

What are the relations between the depth $\l(G)$ and the length $l(G)$ of a finite (or a finite simple)
group $G$? Clearly, $\l(G) \leqs l(G)$.  By a well known theorem of Iwasawa \cite{I}, $\l(G)= l(G)$ (namely, all unrefinable
chains in $G$ have the same length) if and only if $G$ is supersolvable. In particular, $\l(G) < l(G)$ if
$G$ is simple. Note that there are families of finite simple groups $G$ for which $\l(G)$ is bounded while $l(G)$ is unbounded.
For example, $l(A_n)$ is of the order of $\frac{3}{2}n$ by \cite{CST}, whereas $\l(A_n) \leqs 23$ by Theorem \ref{t:main2}.
We show below that a similar phenomenon occurs even for simple groups of minimal depth.

\begin{theorem}\label{t:main6}
For any $n \in \mathbb{N}$, there exists a finite simple group $G$ of minimal depth $\l(G) = 3$
such that $l(G) > n$. In fact, we may take $G = {\rm L}_{2}(p)$ for a suitable prime $p$.
\end{theorem}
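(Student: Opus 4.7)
The plan is to exploit the row of Table~\ref{tab:main} that gives $\l({\rm L}_2(q)) = 3$ when $q = p$ is a prime with $p \equiv \pm 3, \pm 13 \imod{40}$: this pins down $p$ modulo $40$ but leaves $(p-1)/2$ essentially free, so I can simultaneously force $(p-1)/2$ to have many prime factors and use that to build a long chain of subgroups descending through a Borel subgroup of ${\rm L}_2(p)$.

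Fix $n$. First I would pick distinct odd primes $r_1, \ldots, r_n$ with each $r_i \neq 5$, and set $N = r_1 \cdots r_n$, so $\gcd(N, 40) = 1$. The simultaneous congruences $a \equiv 3 \imod{40}$ and $a \equiv 1 \imod{N}$ have a joint solution by the Chinese remainder theorem, and that solution is automatically coprime to $40N$; hence Dirichlet's theorem on primes in arithmetic progressions provides a prime $p$ (indeed, infinitely many) with $p \equiv 3 \imod{40}$ and $p \equiv 1 \imod{N}$. I take any such $p$ large enough that ${\rm L}_2(p)$ is simple.

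Since $p$ is prime with $p \equiv 3 \imod{40}$, Theorem~\ref{t:main1} together with Table~\ref{tab:main} yields $\l({\rm L}_2(p)) = 3$, which is the required minimal depth. Moreover $N$ is odd and divides $p - 1$, so $N$ divides $(p-1)/2$, and hence $(p-1)/2$ has at least the $n$ distinct prime divisors $r_1, \ldots, r_n$. To convert this into a long subgroup chain, let $B$ be a Borel subgroup of ${\rm L}_2(p)$, of order $p(p-1)/2$ and isomorphic to $C_p \rtimes T$ with $T$ a cyclic split torus of order $(p-1)/2$. Any unrefinable chain of subgroups of $T$ from $T$ down to $1$ has length $\Omega((p-1)/2) \geqs n$. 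Prepending the two steps ${\rm L}_2(p) > B > T$ then produces a strictly descending chain from ${\rm L}_2(p)$ to $1$ of length at least $n + 2$, so $l({\rm L}_2(p)) > n$ as required.

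There is no serious obstacle here; the whole argument is a CRT-plus-Dirichlet construction tuned to Table~\ref{tab:main}. The only care needed is compatibility of the two congruence conditions, which is precisely why I take the $r_i$ to be odd and distinct from $5$, and why I need $N$ odd in order to transfer divisibility from $p-1$ to $(p-1)/2$. One might instead try the ``$(q \pm 1)/2$ prime'' branch of Table~\ref{tab:main}, but that rigidly forces $(p \pm 1)/2$ to be prime and so cannot make $\Omega((p \pm 1)/2)$ large; the congruence branch is exactly the source of the flexibility needed to trade the depth constraint for unbounded length.
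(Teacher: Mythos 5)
Your argument is correct and is essentially the paper's own proof: both select, via the Chinese remainder theorem and Dirichlet, a prime $p \equiv \pm 3 \imod{40}$ with $p-1$ divisible by $n$ prescribed odd primes, invoke Theorem \ref{t:main1} for $\l({\rm L}_2(p))=3$, and then bound $l({\rm L}_2(p))$ from below by the length of a solvable subgroup of order divisible by $(p-1)/2$. The only (immaterial) difference is that you descend through the Borel subgroup $C_p \rtimes C_{(p-1)/2}$ where the paper uses the dihedral subgroup $D_{p-1}$.
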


Next, we show that $\l(G)$ is always asymptotically much smaller than $l(G)$.
We need some notation. For integers $l \geqs 36$ define
\[
h(l) = \max\left\{\log_2 (l-2) + \frac{l-2}{\log_2 (l-2)}+1, \; 3 \log_2 ((l-4)/3) +
\frac{2(l-4)}{3\log_2(2(l-4)/3)} + 35 \right\}.
\]
Define a function $f_2: \N \to \R$ by $f_2(l) = l$ for $l < 36$ and
\[
f_2(l) = \min\{l, h(l)\},
\]
for $l \geqs 36$.

\begin{theorem}\label{t:main7}
Let $G$ be a finite simple group. Then
\[
\l(G) \leqs f_2(l(G)).
\]
In particular, $\l(G) \leqs (1+o(1))\frac{l(G)}{\log_2 l(G)}$.
\end{theorem}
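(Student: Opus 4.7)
The trivial bound $\l(G)\leqs l(G)$ is immediate from the definitions (any unrefinable chain is strictly descending), and this alone disposes of the case $l(G)<36$ as well as contributes to the min in $f_2$. The substantive task is to show that $\l(G) \leqs h(l(G))$ whenever $l(G)\geqs 36$. I would argue by cases on the Cameron--Liebeck classification family of $G$.

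Sporadic groups are handled individually: by Lemma \ref{spor} they all have uniformly bounded depth, so the inequality holds trivially once $l(G)\geqs 36$ (and for smaller $l(G)$ we use $\l(G) \leqs l(G)$). For alternating groups Theorem \ref{t:main2} gives $\l(A_n)\leqs 23$ for all $n$, while the known bound $l(A_n)\geqs \tfrac{3}{2}n - O(\log n)$ from \cite{CST} means $23$ is trivially dominated by $h(l(A_n))$ as soon as $l(A_n)\geqs 36$. So the real content lies with the groups of Lie type.

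For $G=G(p^k)$ of Lie type I would split according to Theorem \ref{t:main4}. In the generic case (i.e.\ $G$ not in family (i) of that theorem), I would first establish the length bound $l(G)\geqs 3k+4$, by exhibiting a long chain inside the Sylow $p$-subgroup: the unipotent radical of a Borel subgroup has order $p^{Nk}$ for $N$ equal to the number of positive roots (with $N\geqs 3$ in every such case, since the excluded family (i) covers precisely the Lie-rank one groups of ${\rm A}_1$ and ${}^2{\rm B}_2$ type), and one can extend this chain by the torus and the Borel itself. This yields $k\leqs (l-4)/3$, and substituting into Corollary \ref{c:main5} gives
\[
\l(G) < f_1(k) \leqs f_1\bigl((l-4)/3\bigr) = 3\log_2\bigl((l-4)/3\bigr) + \frac{2(l-4)}{3\log_2(2(l-4)/3)}+35,
\]
which is the second expression in $h(l)$. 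For $G={\rm L}_2(2^k)$ or ${}^2B_2(2^k)$, I would prove the sharper inequality $l(G)\geqs k+2$ (from the chain $G>B>U>\cdots>1$ where $U$ is elementary abelian of rank $k$), so that $k\leqs l-2$. Applying part (i) of Theorem \ref{t:main4} gives
\[
\l(G) \leqs \O(k) + 1 + \min\{\O(2^r-1) : r\in \pi(k)\},
\]
and bounding $\O(k)\leqs \log_2 k$ together with the Zsygmondy-type estimate $\O(2^r-1)\leqs r/\log_2(2r+1)$ (primitive prime divisors are $\equiv 1\imod{r}$) yields the first expression in $h(l)$ after substituting $k\leqs l-2$. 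The $\min\{l,h(l)\}$ structure of $f_2$ absorbs the unitary sub-case (ii) of Theorem \ref{t:main4} via the same application of Corollary \ref{c:main5}.

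The asymptotic claim then falls out: both terms inside $h$ are of order $l/\log_2 l$, with the first term (from the $L_2(2^k)$ family) dominating as $(1+o(1))l/\log_2 l$. The main obstacle is verifying the lower bound $l(G(p^k))\geqs 3k+4$ uniformly across all non-$L_2$/non-Suzuki families, with constants sharp enough to fit the formula for $h$; the Lie-rank one exceptions are exactly where this bound fails, which is why those cases have to be treated separately via Theorem \ref{t:main4}(i) and give rise to the first term of $h$.
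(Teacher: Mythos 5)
Your overall route is the paper's: dispose of sporadic and alternating groups via their bounded depth, and for $G=G(p^k)$ of Lie type bound $k$ from above in terms of $l(G)$ by measuring the length of a chain through a Borel subgroup and its unipotent radical, then substitute into Theorem \ref{t:main4} and Corollary \ref{c:main5}. Your case (i) analysis (the bound $k\leqs l(G)-2$, the estimate $\O(2^r-1)\leqs r/\log_2(2r+1)$ from the congruence $s\equiv 1\imod{r}$, and the resulting first term of $h$) is exactly the paper's argument, and your asymptotic conclusion is correct since the first term of $h$ dominates.

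There is, however, a genuine error in your generic case. You justify the key inequality $l(G)\geqs 3k+4$ (equivalently $k\leqs (l-4)/3$) by asserting that the number $N$ of positive roots satisfies $N\geqs 3$ because ``the excluded family (i) covers precisely the Lie-rank one groups of ${\rm A}_1$ and ${}^2{\rm B}_2$ type.'' This misreads Theorem \ref{t:main4}: family (i) consists only of ${\rm L}_2(2^k)$ and ${}^2B_2(2^k)$, the characteristic-$2$ members. The groups ${\rm L}_2(p^k)$ with $p$ odd therefore land in your generic case, where $N=1$, and for them $l(G)\geqs 3k+4$ is simply false: the paper's own proof of Proposition \ref{p:main8} exhibits $H_i={\rm L}_2(3^{3^i})$ with $l(H_i)<3^{i+1}=3k$ (where $k=3^i$), and $l(H_i)\to\infty$, so these are explicit counterexamples with $l(G)\geqs 36$. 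Since $f_1$ is increasing, your substitution $\l(G)<f_1(k)\leqs f_1((l-4)/3)$ is then a bound in the wrong direction, and the generic case of your proof collapses for this family. The gap is repairable: for ${\rm L}_2(p^k)$ with $p$ odd one has the far stronger bound $\l(G)\leqs 3\O(k)+\l({\rm L}_2(p))\leqs 3\O(k)+4$ (subfield descent plus Lemma \ref{lap1}, cf.\ Proposition \ref{p:l2q}), which together with $k\leqs l(G)-2$ comfortably gives $\l(G)\leqs h(l(G))$; alternatively one can follow the paper, which in the generic case uses only the uniform bound $k\leqs l(G)-2$ and verifies $3\log_2(l-2)+36\leqs h(l)$ directly for $l\geqs 36$. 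Relatedly, your one-sentence dismissal of the unitary case (ii) hides the step where the inequality $k\leqs(l-4)/3$ actually has to be proved (the paper gets it from rank $r\geqs 2$, or from a direct computation for $n=3$ with the single exception ${\rm U}_3(4)$, which has $l=9<36$); that should be spelled out.
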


We also obtain better upper bounds on $\l(G)$ -- see Theorem \ref{t:3.8}.

As for lower bounds, we show the following.

\begin{propn}\label{p:main8}
There exist infinitely many finite simple groups $G_j$ $(j \geqs 1)$ satisfying $l(G_j) \to \infty$
and $\l(G_j) > \log_3 l(G_j) + 1$.
\end{propn}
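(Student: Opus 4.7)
The plan is to take $G_j := {\rm L}_2(2^{3^j})$ for $j \geqs 1$, and to prove $\l(G_j) \geqs j+2$ together with $l(G_j) < 3^{j+1}$. These combine to give $\log_3 l(G_j) + 1 < j+2 \leqs \l(G_j)$, which is the required inequality; and $l(G_j) \to \infty$ is immediate since any chain running down through the elementary abelian Sylow $2$-subgroup already forces $l(G_j) \geqs 3^j$.

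For the lower bound I would induct on $j$, the base case $G_1 = {\rm L}_2(8)$ having depth $3$ (read off from its maximal subgroups $2^3{:}7$, $D_{14}$, $D_{18}$). In the inductive step I first rule out the small exceptional maximal subgroups: since $3^j$ is odd, the Sylow $2$-subgroup of $G_j$ is elementary abelian (so $S_4 \not\leqs G_j$); one checks $2^{3^j} \in \{2,3\} \imod{5}$, so $5 \nmid |G_j|$ and $A_5 \not\leqs G_j$; and since $\mathbb{F}_4$ is not a subfield of $\mathbb{F}_{2^{3^j}}$ (as $2 \nmid 3^j$), no Klein four-subgroup of $G_j$ is normalised by an element of order $3$, so $A_4 \not\leqs G_j$. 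Dickson's theorem then leaves only the Borel $B$, the two dihedrals $D^{\pm}$ of orders $2(2^{3^j}\pm 1)$, and (for $j\geqs 2$) the subfield subgroup $G_{j-1}$. By induction $\l(G_{j-1}) \geqs j+1$; the dihedrals are supersolvable (as $2^{3^j}\pm 1$ are odd), giving $\l(D^{\pm}) = 1 + \O(2^{3^j}\pm 1) \geqs j+1$ via the cyclotomic factorisation $2^{3^j}-1 = \prod_{i=1}^{j} \Phi_{3^i}(2)$ (each factor $>1$). For the Borel $B = U \rtimes T$ with $T = C_{2^{3^j}-1}$ acting irreducibly on $U = \mathbb{F}_2^{3^j}$, a recursive analysis of the subgroups $U \rtimes T^{(k)}$ (where each $T^{(k+1)} \leqs T^{(k)}$ is chosen maximal cyclic retaining a prime factor of $\Phi_{3^j}(2)$, so the action on $U$ stays irreducible and the cyclic complement is a smallest-depth maximal subgroup) yields $\l(B) = 1 + \O(2^{3^j}-1) \geqs j+1$. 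Hence every maximal subgroup of $G_j$ has depth $\geqs j+1$, and $\l(G_j) \geqs j+2$.

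For the upper bound I use $l(G) \leqs \O(|G|)$, which holds because the indices in any unrefinable chain are integers $\geqs 2$ whose $\O$-values sum to $\O(|G|)$. Then
\[
l(G_j) \leqs \O(|G_j|) = 3^j + \O(2^{3^j}-1) + \O(2^{3^j}+1),
\]
and since $2^{3^j}\pm 1$ are odd, each of $\O(2^{3^j}\pm 1)$ is bounded by $\log_3(2^{3^j}+1) < 3^j \log_3 2 + 1$. Therefore
\[
l(G_j) < 3^j(1+2\log_3 2)+2 = 3^j \log_3 12 + 2 < 3^{j+1},
\]
since $\log_3 12 < 3$.

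The hard part will be the Borel computation $\l(B) = 1+\O(2^{3^j}-1)$. This demands a careful recursion on the pairs $(U, T^{(k)})$, verifying at each level that a maximal cyclic $T^{(k+1)} \leqs T^{(k)}$ can be chosen keeping the action on $U$ irreducible (by retaining a prime factor of the top-order cyclotomic $\Phi_{3^j}(2)$, i.e.\ a prime $p$ with $\mathrm{ord}_p(2) = 3^j$), and that the complement $T^{(k)}$ is indeed the smallest-depth maximal subgroup of $U \rtimes T^{(k)}$ at every stage; the recursion then unwinds to give $\l_{k} = \O(T^{(k)}) + 1$, and in particular $\l(B) = \O(2^{3^j}-1) + 1$.
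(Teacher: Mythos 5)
Your argument is correct in outline and reaches the same conclusion by a genuinely different route. The paper works with $H_i = {\rm L}_2(3^{3^i})$, bounds $l(H_i)$ from above via the formula $l(G_r(p^k)) = r + l(B)$ of \cite[Theorem A]{SST} -- which holds only for $k \geqs F(p,r)$ and therefore forces the unspecified shift $G_j = H_{j+c}$ -- and reads off the depth directly from Proposition \ref{p:l2q}. You instead take ${\rm L}_2(2^{3^j})$ and replace the appeal to \cite{SST} by the trivial bound $l(G) \leqs \O(|G|)$; your numerics $l(G_j) < 3^j\log_3 12 + 2 < 3^{j+1}$ are correct for every $j \geqs 1$, so you avoid the constant $c$ altogether, which is a genuine simplification of the length estimate. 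On the depth side you are essentially re-proving a special case of Proposition \ref{p:l2q}: for $k=3^j$ odd that result gives $\l({\rm L}_2(2^{3^j})) = \O(3^j)+1+\min\{\O(7),\O(9)\} = j+2$ at once, so you could simply cite it rather than re-run Dickson's theorem (your exclusions of $A_4$, $S_4$, $A_5$ are correct but unnecessary, since for $q=2^m$ the list of maximal subgroups in \cite{Dix} consists only of the Borel subgroup, the two dihedral groups and the subfield subgroups).

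The one genuine soft spot is the Borel computation, which you rightly flag as the hard part. The upper bound $\l(B) \leqs 1+\O(2^m-1)$ is immediate from a chain $B > T > \cdots > 1$ through the irreducible cyclic complement $T$, but for the lower bound your recursion as described descends only through subgroups $U \rtimes T^{(k)}$ containing the \emph{full} unipotent group $U$, insisting that each $T^{(k)}$ still act irreducibly. That cannot be arranged in general: for $m=9$, for instance, the index-$73$ subgroup $C_7 < T = C_{511}$ lies in $\mathbb{F}_{8}^{*}$ and acts reducibly on $U = \mathbb{F}_{2^9}$, so the maximal subgroup $U \rtimes C_7$ of $B$ has maximal subgroups of the form $V \rtimes C_7$ with $0 < V < U$, and these must be bounded below as well. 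The fix is to run the induction over all pairs $(V,S)$ with $S \leqs T$ and $0 \ne V \leqs U$ an $S$-submodule, proving $\l(V \rtimes S) \geqs \O(|S|)+1$: every maximal subgroup of $V \rtimes S$ is either $V \rtimes S_0$ with $S_0$ maximal in $S$, or (using Schur--Zassenhaus, as $|S|$ is odd) a conjugate of $W \rtimes S$ with $W$ a maximal $S$-submodule of $V$, possibly $W=0$; the inductive hypothesis then gives depth at least $\O(|S|)$ in every case. With that correction your bound $\l(B) \geqs j+1$, and hence $\l(G_j) \geqs j+2$, goes through.
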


It would be nice to close the gap between the upper bound in Theorem \ref{t:main7} and the
lower bound in Proposition \ref{p:main8}. However, this depends on formidable open problems
in Number Theory. See the discussion at the end of Section \ref{s:proofs}.

In \cite{BWZ}, the expression $l(G)-\l(G)$ is called the \emph{chain difference} of $G$, denoted by ${\rm cd}(G)$. It
follows from Iwasawa's theorem mentioned above that ${\rm cd}(G) \geqs 1$ for all finite simple groups $G$. Using the classification theorem, the simple groups $G$ with ${\rm cd}(G)=1$ were determined by Brewster et al. \cite{BWZ} -- the only examples are $A_6$ and ${\rm L}_{2}(p)$ for certain primes $p$ (it is not known whether there are infinitely many examples). In \cite{HS}, Hartenstein and Solomon present a more elementary proof of the same result, by means of a reduction to groups with dihedral or semi-dihedral Sylow $2$-subgroups. In particular, the proof in \cite{HS} does not require the classification of finite simple groups.

The finite simple groups of minimal length $4$ have depth $3$ and chain difference $1$, and so can be read
off from Theorem \ref{t:main1} above, together with \cite{BWZ}. The precise list is given in \cite[Theorem 3.2]{Pet}.
On the other hand, our results imply that the chain difference of a finite simple group is usually large.

In fact, using Theorem \ref{t:main7} it follows immediately that the length $l(G)$ of a finite simple group $G$ is bounded above in terms of its chain difference ${\rm cd}(G) = l(G) - \l(G)$, and even in terms of its \emph{chain ratio}, defined by ${\rm cr}(G) = l(G)/\l(G)$.

\begin{corol}
\label{c:main9}
We have
\[
l(G) \leqs (1+o(1)) {\rm cd}(G)
\]
and
\[
l(G) \leqs 2^{(1+o(1)){\rm cr}(G)},
\]
for all finite simple groups $G$, where $o(1)$ is $o_{{\rm cd}(G)}(1)$ and $o_{{\rm cr}(G)}(1)$ respectively.

In particular, the following statements are equivalent for any collection $\mathcal{S}$ of finite simple groups:
\begin{itemize}\addtolength{\itemsep}{0.2\baselineskip}
\item[{\rm (i)}] The set $\{ {\rm cr}(G) \, : \, G \in \mathcal{S} \}$ is bounded.
\item[{\rm (ii)}] The set $\{ {\rm cd}(G) \, : \, G \in \mathcal{S} \}$ is bounded.
\item[{\rm (iii)}] The set $\{ l(G) \, : \, G \in \mathcal{S} \}$ is bounded.
\end{itemize}
\end{corol}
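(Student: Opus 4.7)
The plan is to derive all three parts directly from Theorem \ref{t:main7}, and in particular from the asymptotic form
\[
\l(G) \leqs (1+o(1)) \frac{l(G)}{\log_2 l(G)},
\]
where the $o(1)$ is understood with respect to $l(G) \to \infty$. Everything else will be elementary rearrangement together with two trivial lower bounds on $l(G)$ that let me re-express the $o(1)$ in terms of ${\rm cd}(G)$ or ${\rm cr}(G)$.

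For the first inequality $l(G) \leqs (1+o(1)){\rm cd}(G)$, I would substitute $\l(G) = l(G) - {\rm cd}(G)$ into the above and rearrange to
\[
l(G)\Bigl(1 - \frac{1+o(1)}{\log_2 l(G)}\Bigr) \leqs {\rm cd}(G).
\]
Since ${\rm cd}(G) \leqs l(G)$, any family with ${\rm cd}(G) \to \infty$ satisfies $l(G) \to \infty$, so the bracketed factor tends to $1$ and the claimed bound, with $o(1) = o_{{\rm cd}(G)}(1)$, follows.

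For the second inequality $l(G) \leqs 2^{(1+o(1)){\rm cr}(G)}$, I would substitute $\l(G) = l(G)/{\rm cr}(G)$ into the same asymptotic to obtain $\log_2 l(G) \leqs (1+o(1)){\rm cr}(G)$, and then exponentiate. To change $o_{l(G)}(1)$ into $o_{{\rm cr}(G)}(1)$ I would invoke Corollary \ref{d3}, which gives $\l(G)\geqs 3$ and therefore $l(G) \geqs 3\,{\rm cr}(G)$; hence ${\rm cr}(G) \to \infty$ again forces $l(G) \to \infty$.

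For the three-way equivalence, the implications (iii) $\Rightarrow$ (ii) and (iii) $\Rightarrow$ (i) are immediate from the trivial bounds ${\rm cd}(G) \leqs l(G)$ and ${\rm cr}(G) \leqs l(G)/3$, while (ii) $\Rightarrow$ (iii) and (i) $\Rightarrow$ (iii) are precisely the two asymptotic inequalities just established. The argument is essentially bookkeeping on top of Theorem \ref{t:main7}; the only mild obstacle is keeping the direction of the $o(1)$ quantifier straight, but this is resolved by the elementary inequalities $l(G) \geqs {\rm cd}(G)$ and $l(G) \geqs 3\,{\rm cr}(G)$.
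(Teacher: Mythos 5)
Your proposal is correct and follows exactly the route the paper intends: the paper's entire justification is the remark that both asymptotic inequalities (and hence the three-way equivalence) follow from the last statement of Theorem \ref{t:main7}, and your substitutions $\l(G)=l(G)-{\rm cd}(G)$ and $\l(G)=l(G)/{\rm cr}(G)$, together with the elementary bounds $l(G)\geqs {\rm cd}(G)$ and $l(G)\geqs 3\,{\rm cr}(G)$ to transfer the $o(1)$ to the correct parameter, are precisely the bookkeeping the authors leave to the reader.
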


Indeed, the first two assertions of Corollary \ref{c:main9} (which imply the third one) follow from the last statement of Theorem \ref{t:main7}.
We note that condition (iii) above is equivalent (for any collection $\mathcal{S}$ of finite groups $G$) to a purely number theoretic condition.
Indeed, it is trivial that $l(G) \leqs \O(|G|)$, and by \cite[Proposition 2.2]{AST}  we have $\O(|G|) \leqs l(G)^2$. Thus the set $\{ l(G) \, : \, G \in \mathcal{S} \}$ is bounded if and only if the set $\{ \O(|G|) \, : \, G \in \mathcal{S} \}$ is bounded. Furthermore, it is known that there are infinitely many finite simple groups of bounded length; indeed \cite[Corollary D]{AST} implies that there are infinitely many primes
$p$ with $l({\rm L}_{2}(p)) \leqs 20$.

Our study of the depth of finite simple groups is partly motivated by our recent work
on the minimal and random generation of so-called \emph{$t$-maximal subgroups} of finite simple groups, where $t=1,2,3$ (see \cite{BLS1, BLS2}).

The proofs of results \ref{t:main1}--\ref{p:main8} are given in Section \ref{s:proofs} and we record some relevant preliminary results in Section \ref{s:prel}. In this paper we adopt the notation from \cite{KL} for simple groups of Lie type. In particular  we write ${\rm PSL}_{n}(q) = {\rm L}_{n}(q) = {\rm L}_{n}^{+}(q)$ and ${\rm PSU}_{n}(q) = {\rm U}_{n}(q) = {\rm L}_{n}^{-}(q)$, etc. We are grateful to Roger Heath-Brown for helpful correspondence.

\section{Preliminaries}\label{s:prel}

We begin with elementary observations.

\begin{lem}\label{norm}
Let $G$ be a finite group and let $\mathcal{M}$ be the set of maximal subgroups of $G$.
\begin{itemize}\addtolength{\itemsep}{0.2\baselineskip}
\item[{\rm (i)}] $\l(G) = 1 + \min\{\l(M) \, :\, M \in \mathcal{M}\}$.
\item[{\rm (ii)}] If $N$ is a normal subgroup of $G$, then
\[
\l(G/N) \leqs \l(G) \leqs \l(G/N)+\l(N).
\]
\end{itemize}
\end{lem}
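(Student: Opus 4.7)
The plan is to prove the two parts separately, both by direct chain manipulation. Neither part should offer any serious obstacle; the one point that requires a moment's thought is the behaviour of maximality under quotients in part (ii).

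For part (i), I would argue as follows. Any unrefinable chain from $G$ to $1$ has the shape $G > M > G_2 > \cdots > G_t = 1$ where $M$ is some maximal subgroup of $G$, and the truncated chain $M > G_2 > \cdots > G_t = 1$ is an unrefinable chain in $M$. Hence the minimal length of an unrefinable chain in $G$ that starts with $G > M$ is exactly $1 + \l(M)$, and so
\[
\l(G) = 1 + \min\{\l(M) : M \in \mathcal{M}\},
\]
where the minimum is realised by choosing $M$ optimally at the first step. Conversely, for any $M \in \mathcal{M}$, prepending $G$ to a shortest unrefinable chain of $M$ gives an unrefinable chain of $G$ of length $1 + \l(M)$.

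For the second inequality in part (ii), I would start with a shortest unrefinable chain $G/N = \bar H_0 > \bar H_1 > \cdots > \bar H_s = 1$ of length $s = \l(G/N)$. By the correspondence theorem this lifts to a chain $G = H_0 > H_1 > \cdots > H_s = N$ in which each $H_{i+1}$ is maximal in $H_i$. Concatenating this with a shortest unrefinable chain from $N$ down to $1$ (of length $\l(N)$) produces an unrefinable chain of $G$ of length $s + \l(N)$, which gives $\l(G) \leqs \l(G/N) + \l(N)$.

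The first inequality in part (ii) is where the small technical point enters. Take a shortest unrefinable chain $G = G_0 > G_1 > \cdots > G_t = 1$ of $G$, with $t = \l(G)$, and consider the sequence of images $G_iN/N$ in $G/N$. The key observation is that if $A$ is maximal in $B$ and $N \trianglelefteq G$, then $AN$ either equals $BN$ or is maximal in $BN$: indeed, any subgroup $C$ with $AN \leqs C \leqs BN$ satisfies $N \leqs C$ and hence $C = (C \cap B)N$ by a Dedekind-type argument, and then the maximality of $A$ in $B$ forces $C \cap B \in \{A,B\}$, so $C \in \{AN, BN\}$. Applying this to each consecutive pair $(G_{i+1}, G_i)$ and deleting repetitions, we obtain a strictly descending unrefinable chain of $G/N$ from $G/N$ to $N/N = 1$ of length at most $t$. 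Hence $\l(G/N) \leqs \l(G)$, completing the proof. The only step I expect to require care is the verification of the Dedekind-style identity $C = (C \cap B)N$ and of the maximality claim for $AN$ in $BN$; everything else is bookkeeping.
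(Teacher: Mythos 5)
Your proof is correct. The paper states this lemma without proof, presenting it as an elementary observation, so there is no argument of the authors' to compare against; your write-up simply supplies the standard details. All three pieces check out: part (i) is immediate from truncating/prepending chains, the lift-and-concatenate argument gives the upper bound in (ii), and the one point genuinely requiring care --- that for $A$ maximal in $B$ and $N\normeq G$ one has $AN=BN$ or $AN$ maximal in $BN$, via the modular-law identity $C=(C\cap B)N$ for $AN\leqs C\leqs BN$ --- is verified correctly, yielding $\l(G/N)\leqs \l(G)$ after deleting repetitions.
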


\begin{lem}\label{frob}
Suppose $\l(G)=2$ and let $M$ be a maximal subgroup of $G$ of prime order. Then either $M\normeq G$, or $G$ is a Frobenius group of the form $NM$, where $N \normeq G$ and $M$ acts fixed point freely on $N$.
\end{lem}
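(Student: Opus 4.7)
The plan is to reduce to the classical Frobenius theorem. The key observation is that maximality of $M$ in $G$ severely restricts $N_G(M)$, and primality of $|M|$ forces distinct conjugates of $M$ to meet trivially, which is exactly the hypothesis of Frobenius's theorem.

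First I would dispose of the case $M \normeq G$, which is one of the two conclusions. So assume from now on that $M$ is not normal in $G$. Since $M \leqs N_G(M) \leqs G$ and $M$ is a maximal subgroup of $G$, maximality forces $N_G(M) = M$.

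Next I would show that any two distinct $G$-conjugates of $M$ intersect trivially. Let $g \in G \setminus M$ and set $H = M \cap M^g$. Then $H$ is a subgroup of $M$, and since $|M|$ is prime, $H$ is either $1$ or $M$. In the latter case $M \leqs M^g$, hence $M = M^g$ (equal orders), giving $g \in N_G(M) = M$, a contradiction. Hence $M \cap M^g = 1$.

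At this point $M$ is a malnormal (T.I.) subgroup of $G$ with $N_G(M) = M$, so Frobenius's theorem applies: setting
\[
N = \left(G \setminus \bigcup_{g \in G} M^g \right) \cup \{1\},
\]
$N$ is a normal subgroup of $G$ with $G = NM$, $N \cap M = 1$, and $M$ acts on $N \setminus \{1\}$ by conjugation without fixed points. This gives exactly the Frobenius structure claimed. The main (and essentially only) non-trivial input is Frobenius's theorem itself, whose proof uses character theory; everything else is a short argument from the maximality of $M$ and the primality of $|M|$. Note that the hypothesis $\l(G) = 2$ plays no essential role in the argument beyond ensuring $G$ is not itself of prime order, which is automatic once $M < G$ exists.
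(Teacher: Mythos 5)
Your argument is correct and is exactly the route the paper takes: the paper's proof is the single sentence that if $M$ is not normal then the action of $G$ on the cosets of $M$ is Frobenius, and your write-up simply supplies the details behind that claim ($N_G(M)=M$ by maximality, trivial intersection of distinct conjugates by primality of $|M|$, then Frobenius's theorem). Your closing remark that $\l(G)=2$ is not actually used is also accurate.
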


\begin{proof}
If $M$ is not normal in $G$, then the action of $G$ on the cosets of $M$ is Frobenius. \end{proof}

\begin{cor}\label{d3}
If $G$ is a finite simple group, then $\l(G) \geqs 3$.
\end{cor}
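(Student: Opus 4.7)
The plan is to rule out $\l(G) \in \{1,2\}$ directly using Lemmas \ref{norm} and \ref{frob}, exploiting the fact that a non-abelian simple group has no proper non-trivial normal subgroup.

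First, I would dispose of the case $\l(G)=1$. By the discussion after the definition of depth, $\l(G)=1$ forces $G$ to have prime order, hence to be abelian, contradicting the hypothesis that $G$ is a (non-abelian) simple group.

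The main step is to exclude $\l(G)=2$. Suppose for contradiction that $\l(G)=2$. By Lemma \ref{norm}(i), there is a maximal subgroup $M$ of $G$ with $\l(M)=1$, so $M$ has prime order, and in particular $M$ is a proper non-trivial subgroup of $G$. I would now invoke Lemma \ref{frob}: either $M \normeq G$, or $G=NM$ is a Frobenius group with Frobenius kernel $N \normeq G$. In the first case $M$ itself is a proper non-trivial normal subgroup of $G$, contradicting simplicity. In the second case $N$ is a proper non-trivial normal subgroup of $G$ (proper because $M \neq 1$ and $M \cap N = 1$ in a Frobenius decomposition, non-trivial because $M$ is proper in $G$), again contradicting simplicity.

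Both cases yielding contradictions, we conclude $\l(G) \geqs 3$. No obstacle is anticipated; the argument is a short deduction from the two preceding lemmas, and the only point to be a bit careful about is confirming that the Frobenius kernel $N$ in Lemma \ref{frob} is a proper non-trivial normal subgroup, which follows from the definition of a Frobenius group together with the maximality of $M$.
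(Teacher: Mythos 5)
Your argument is correct and is precisely the intended deduction in the paper, which states Corollary \ref{d3} immediately after Lemma \ref{frob} without further proof: a $2$-chain forces a maximal subgroup of prime order, and Lemma \ref{frob} then produces a proper non-trivial normal subgroup (either $M$ itself or the Frobenius kernel $N$), contradicting simplicity. Your care in checking that $N$ is proper and non-trivial is appropriate but routine, and there is no gap.
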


\begin{lem}\label{soph}
Suppose $G$ is a finite simple group, and $M$ is a nilpotent maximal subgroup of $G$. Then $M$ is a non-abelian Sylow $2$-subgroup of $G$.
\end{lem}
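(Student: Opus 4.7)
The plan is to reduce the claim to classical results of Thompson and later authors on finite groups possessing a nilpotent maximal subgroup. Since $M$ is nilpotent, it decomposes as $M = P \times Q$, where $P$ is the Sylow $2$-subgroup of $M$ and $Q$ is its Hall $2'$-subgroup; both factors are characteristic in $M$.

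First I would show $P \neq 1$. By Thompson's theorem, a finite group with a nilpotent maximal subgroup of odd order is solvable; since the simple group $G$ is non-solvable, $|M|$ must be even. Next I would show $Q = 1$. Because $Q$ is characteristic in $M$, we have $M \leqs N_G(Q)$, and by maximality of $M$ the normalizer $N_G(Q)$ equals either $M$ or $G$. The case $N_G(Q) = G$ is ruled out by the simplicity of $G$, since $Q \neq 1$ would then provide a proper nontrivial normal subgroup. The case $N_G(Q) = M$ is the main obstacle: it is not blocked by an elementary normalizer argument, and here one must invoke the deeper structural theory of non-solvable groups with a nilpotent maximal subgroup (an extension of Thompson's work), which forces the ambient group to be solvable unless $Q = 1$.

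Once $M = P$ is known to be a $2$-group, the remaining assertions follow quickly. Being a $2$-subgroup of $G$, $M$ lies in some Sylow $2$-subgroup $S$ of $G$; maximality of $M$ together with the fact that $G$ is not itself a $2$-group (else $G$ would be solvable) give $M = S$, so $M$ is a full Sylow $2$-subgroup. Finally, to see that $M$ is non-abelian, I would appeal to the classification of finite simple groups with abelian Sylow $2$-subgroups (Walter et al.): in each such group the Sylow $2$-subgroup is strictly contained in a larger proper subgroup and so cannot be maximal. This rules out abelian $M$ and completes the proof.
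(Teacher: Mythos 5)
Your outline is right, but the crucial step is not actually proved. Eliminating the odd part $Q$ of $M$ is where the entire content of the lemma lies, and at exactly that point you write that ``one must invoke the deeper structural theory of non-solvable groups with a nilpotent maximal subgroup'' without identifying a theorem or giving an argument. A body of results in this direction does exist (Thompson's odd-order case, then Janko and Baumann), but as written your proof asserts the key implication rather than establishing it. The mechanism the paper uses is concrete and self-contained: if $p$ is an odd prime dividing $|M|$ and $P$ is the (characteristic) Sylow $p$-subgroup of $M$, then $M = N_G(P)$ by maximality and simplicity, hence also $M = N_G(Z(J(P)))$ where $J(P)$ is the Thompson subgroup; since $M$ is nilpotent it has a normal $p$-complement, so the Glauberman--Thompson normal $p$-complement theorem forces $G$ to have one, contradicting simplicity. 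Note that this single argument also disposes of the case $|M|$ odd, so your separate appeal to Thompson's solvability theorem for the first step becomes unnecessary: the two halves of your reduction collapse into one application of Glauberman--Thompson.

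Two smaller points. First, your deduction that $M \in {\rm Syl}_2(G)$ should be phrased via $M = N_G(M)$ (a proper $2$-subgroup is always properly contained in the normalizer of it inside a Sylow $2$-subgroup), rather than via ``$G$ is not a $2$-group''; the latter alone does not immediately give $M = S$. Second, for non-abelianity, invoking Walter's classification of simple groups with abelian Sylow $2$-subgroups is correct but very heavy machinery: since $M = N_G(M)$ and $M$ is abelian, $M = Z(M) = Z(N_G(M))$, so Burnside's normal $p$-complement theorem already yields a normal $2$-complement and the desired contradiction in one line.
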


\begin{proof}
Suppose first that $M$ has a nontrivial Sylow $p$-subgroup $P$ for some odd prime $p$. Then $M = N_G(P)$ since $M$ is maximal, and hence also $M = N_G(Z(J(P)))$, where $J(P)$ is the Thompson subgroup of $P$. Therefore $G$ has a normal $p$-complement by the Glauberman-Thompson normal $p$-complement theorem (see \cite[Section 8.3]{Gor}, for example). This is a contradiction.

Hence $M$ is a 2-group. Also $M \in {\rm Syl}_2(G)$ since $M = N_G(M)$. Finally, if $M$ is abelian then $M = Z(M) = Z(N_G(M))$, and so $G$ has a normal 2-complement by Burnside's normal $p$-complement theorem. Hence $M$ is non-abelian.
\end{proof}

\begin{rem}
There are genuine examples in Lemma \ref{soph}. For instance, $D_{16}$ is a maximal subgroup of ${\rm L}_{2}(17)$.
\end{rem}

Our final result in this section concerns the existence of alternating (or symmetric) maximal subgroups of certain simple classical groups. For the proof, we need to recall a standard construction.

Let $p$ be a prime, let $d \geqs 5$ be an integer and consider the permutation module $\mathbb{F}_{p}^d$ for the symmetric group $S_d$. Define subspaces
\begin{equation}\label{e:fd}
U=\{(a_1, \ldots, a_d) \,:\, \sum_{i}a_i=0\},\;\; W=\{(a,\ldots, a) \,:\, a \in \mathbb{F}_{p}\}
\end{equation}
of $\mathbb{F}_{p}^d$, and observe that $U$ and $W$ are the only nonzero proper $A_d$-invariant submodules of $\mathbb{F}_{p}^d$. Then
$V=U/(U \cap W)$
is the \emph{fully deleted permutation module} for $A_d$, which is an absolutely irreducible $A_d$-module over $\mathbb{F}_{p}$. Set $n = \dim V$ and note that
$n = d-2$ if $p$ divides $d$, otherwise $n = d-1$.

If $p$ is odd, then the corresponding representation embeds $A_d$ into an orthogonal group $\O_{n}^{\e}(p)$.
If $p=2$ then $n$ is even and either $d \equiv 2 \imod{4}$ and $A_d$ embeds in ${\rm Sp}_{n}(2)$, or $d \not\equiv 2 \imod{4}$ and we obtain an embedding $A_d \leqs \Omega_{n}^{\e}(2)$ (see \cite[p.187]{KL} for further details).

\begin{lem}\label{max}
Let $G = \O_{n}^{\e}(p)$, where $n \geqs 5$, $p$ is a prime and one of the following holds:
\begin{itemize}\addtolength{\itemsep}{0.2\baselineskip}
\item[{\rm (i)}] $np$ is odd, $n\ne 7$ and $(n+1,p)=1$;
\item[{\rm (ii)}] $(p,\e)=(2,+)$ and $n \equiv 0,6 \imod{8}$;
\item[{\rm (iii)}] $(p,\e)=(2,-)$ and $n \equiv 2,4 \imod{8}$.
\end{itemize}
Then $G$ has a maximal alternating or symmetric subgroup. The same conclusion holds if $G = {\rm Sp}_{n}(2)$, $n \geqs 8$ and $n \equiv 0 \imod{4}$.
\end{lem}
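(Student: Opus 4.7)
The plan is to realise $A_d$ (or $S_d$) as a subgroup of $G$ via the fully deleted permutation module construction reviewed just before the statement, and then verify maximality by invoking Aschbacher's classification of maximal subgroups of finite classical groups, in the form presented in \cite{KL}.

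The first step is to choose $d$ in each case so that the dimension $n$ and form type $\e$ coming from the construction match the hypothesis. In case (i), set $d = n+1$: since $(n+1,p)=1$, we have $p \nmid d$, so $\dim V = d-1 = n$, and as $n$ and $p$ are both odd there is only one isomorphism class of nondegenerate symmetric form, giving $A_d \leqs \O_n(p)$. In cases (ii) and (iii), consult the explicit description in \cite[p.187]{KL}: the residues $n \equiv 0,6 \imod 8$ and $n \equiv 2,4 \imod 8$ are exactly the cases where a suitable $d \not\equiv 2 \imod 4$ can be chosen yielding $\e = +$ or $\e = -$ respectively, and we pick $d$ accordingly ($d \in \{n+1, n+2\}$ depending on whether $2 \mid d$). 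For the final assertion about ${\rm Sp}_n(2)$, take $d = n+2$, so that $d \equiv 2 \imod 4$ (as $n \equiv 0 \imod 4$ and $n \geqs 8$ forces $d \geqs 10$), giving $A_d \leqs {\rm Sp}_n(2)$.

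Next I would verify that $H := N_G(A_d)$ is maximal in $G$. By Aschbacher's theorem, any proper overgroup of $H$ lies in one of the geometric classes $\mathcal{C}_1, \dots, \mathcal{C}_8$ or in the class $\mathcal{S}$ of almost simple irreducible subgroups. The module $V$ is absolutely irreducible for $A_d$, which immediately excludes $\mathcal{C}_1, \mathcal{C}_3, \mathcal{C}_4, \mathcal{C}_5, \mathcal{C}_7$; containment in a $\mathcal{C}_2$ (imprimitive) subgroup would force a block system on the simplex of $d$ points preserved by $A_d$, which does not occur for the parameters listed; the $\mathcal{C}_6$ and $\mathcal{C}_8$ classes are ruled out either by our dimensional congruences or by the ambient type of $G$. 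The excluded value $n=7$ in (i) corresponds precisely to the sporadic isomorphism $A_8 \cong \mathrm{L}_4(2)$ and the non-generic embedding of $A_7$ in $\O_7(p)$, and is removed for this reason. The lower bounds $n \geqs 5$ and $n \geqs 8$ similarly discard a handful of low-dimensional exceptional containments.

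The main obstacle is excluding containment in a strictly larger member of $\mathcal{S}$, that is, an almost simple irreducible $H < K < G$ whose socle is not $A_d$. This is handled by appealing to the explicit lists of $\mathcal{S}$-subgroups in \cite{KL}, combined with an order comparison: for the ranges of $n$ under consideration, any almost simple cross-characteristic irreducible subgroup of $G$ acting in dimension $n$ has order strictly smaller than $|A_d|$, while an $\mathcal{S}$-subgroup in the defining characteristic must be a group of Lie type whose natural module has dimension incompatible with $n = d - 1$ or $d - 2$. Combining these exclusions, $H$ is maximal, and the lemma follows.
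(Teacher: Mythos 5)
Your construction of the embedding is the same as the paper's, but the maximality step has a genuine gap. The paper does not (and cannot) dispose of overgroups in Aschbacher's class $\mathcal{S}$ by consulting ``explicit lists of $\mathcal{S}$-subgroups in \cite{KL}'': the Kleidman--Liebeck book deliberately excludes class $\mathcal{S}$ from its analysis, and no general classification of $\mathcal{S}$-maximals exists beyond dimension $12$ (where \cite{BHR} is used). Your fallback, an order comparison showing every irreducible almost simple $K$ with $H<K<G$ has order smaller than $|A_d|$, is asserted rather than proved and is not a routine citation. The paper instead observes that $H$ contains the element $(1,2)(3,4)$, whose $(-1)$-eigenspace (resp.\ Jordan form $[J_2^2,J_1^{n-4}]$ in characteristic $2$) shows that any irreducible overgroup $K$ contains an element moving only a $2$-dimensional subspace; the possibilities for such $K$ are classified in \cite[Theorem 7.1]{GS}, and inspection shows none occur for $n>12$. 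The cases $5\leqslant n\leqslant 12$ are then read off from the tables in \cite{BHR}. Without some substitute for the Guralnick--Saxl input, your argument does not close off class $\mathcal{S}$, which is precisely the hard part of the lemma.

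There is also a concrete error in your choice of $d$ in case (i). You take $d=n+1$ throughout, but when $p$ divides $n+2$ the fully deleted permutation module for $A_{n+2}$ also has dimension $n$, and its restriction to the point stabilizer $A_{n+1}$ is the deleted permutation module for $A_{n+1}$; hence $A_{n+1}<A_{n+2}<\Omega_n(p)$ and your subgroup is not maximal. This is why the paper sets $\delta=2$ when $p\mid n+2$ and $\delta=1$ otherwise, embedding $A_{n+\delta}$ rather than $A_{n+1}$ in all cases.
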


\begin{proof}
For $n \leqs 12$, we refer the reader to the relevant tables in \cite{BHR}. Now assume $n>12$. Let $V$ be the natural module for $G$.

Suppose (i) holds and define $\d \in \{1,2\}$ to be 2 if $p$ divides $n+2$, and 1 otherwise. Consider the embedding of $A_{n+\d}$ in $G=\O_n(p) = \O(V)$ afforded by the fully deleted permutation module for $A_{n+\d}$ over $\mathbb{F}_{p}$. Set $H = N_G(A_{n+\d}) = A_{n+\d}$ or $S_{n+\d}$.

We claim that $H$ is a maximal subgroup of $G$. To see this, suppose there is a subgroup $K$ of $G$ such that $H<K<G$. Since $K$ is irreducible and the $(-1)$-eigenspace of $(1,2)(3,4) \in H$ on $V$ is $2$-dimensional, the possibilities for $K$ are given in \cite[Theorem 7.1]{GS}. However, by inspection we see that no examples arise with $n>12$, whence $H$ is maximal.  (Note that $H$ is clearly primitive and tensor-indecomposable on $V$, so \cite{GS} applies.)

A very similar argument applies in cases (ii) and (iii). For example, consider (iii). Here $G = \O_{n}^{-}(2)$ and $n \equiv 2,4 \imod{8}$. Set $H = A_{n+\delta}$, where $\delta=2$ if $n \equiv 2 \imod{8}$ and $\delta=1$ if $n \equiv 4 \imod{8}$. As before, the fully deleted permutation module $V = V_n(2)$ embeds $H$ in $G$ (note that transpositions in $S_{n+\delta}$ act as transvections on $V$, so $S_{n+\delta} \not\leqs G$). As before, we can establish the maximality of $H$ by applying \cite[Theorem 7.1]{GS}, noting that $(1,2)(3,4) \in H$ has Jordan form $[J_2^2,J_{1}^{n-4}]$ on $V$. An entirely similar argument shows that $G = {\rm Sp}_{n}(2)$ (with $n \geqs 8$ and $n \equiv 0 \imod{4}$) has a maximal subgroup $S_{n+2}$.
\end{proof}

\section{Proofs}\label{s:proofs}

Let $G$ be a finite group. Define a \emph{$t$-chain} of $G$ to be an unrefinable chain of subgroups of length $t$ as in \eqref{e:chain}.

\begin{lem}\label{lap1}
If $p$ is prime, then $\l({\rm L}_2(p)) \leqs 4$.
\end{lem}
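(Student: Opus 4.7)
The plan is to produce, for each prime $p$, a maximal subgroup $M$ of $L_2(p)$ with $\lambda(M) \leqs 3$; Lemma~\ref{norm}(i) then yields $\lambda(L_2(p)) \leqs 1+3 = 4$. The cases $p\in\{2,3\}$ are handled by inspection, since $L_2(2)\cong S_3$ and $L_2(3)\cong A_4$ both have depth $2$.

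For $p \geqs 5$, I will invoke Dickson's classification of the maximal subgroups of $\mathrm{PSL}_{2}(p)$. The key observation is that at least one of $A_4$, $S_4$, $A_5$ always occurs as a maximal subgroup of $L_2(p)$: namely, $A_5$ is maximal whenever $p \equiv \pm 1 \imod 5$ and $p \neq 5$; $S_4$ is maximal when $p \equiv \pm 1 \imod 8$ and $p \not\equiv \pm 1 \imod 5$; and $A_4$ is maximal when $p \equiv \pm 3 \imod 8$ and $p \not\equiv \pm 1 \imod 5$ (this last case also covers $p = 5$, since $L_2(5) = A_5$ has $A_4$ as a maximal subgroup). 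Every odd prime lies in $\{\pm 1, \pm 3\}\imod 8$, so at least one of the three cases always applies. To see that the cases really cover things, note that for $p\geqs 5$ the group $L_2(p)$ contains a copy of $A_4$, and its maximal overgroup cannot be a Borel or dihedral subgroup (both are metabelian), so it must be exceptional, i.e.\ one of $A_4$, $S_4$, $A_5$.

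It remains to verify $\lambda(M) \leqs 3$ for $M \in \{A_4, S_4, A_5\}$. This is immediate from the unrefinable chains $A_4 > C_3 > 1$ (length $2$), $S_4 > A_4 > C_3 > 1$ (length $3$) and $A_5 > A_4 > C_3 > 1$ (length $3$), where each step is easily checked to be maximal ($C_3$ is a Sylow $3$-subgroup of $A_4$ of prime index $4$; $A_4$ has index $2$ in both $S_4$ and $A_5$; and $|C_3|=3$ is prime). Combined with the previous paragraph and Lemma~\ref{norm}(i), this gives $\lambda(L_2(p)) \leqs 4$. No substantial obstacle arises: the whole argument reduces to invoking Dickson's classification and performing these elementary chain calculations.
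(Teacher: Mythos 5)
Your proof is correct and follows essentially the same route as the paper's: invoke Dickson's classification to produce a maximal subgroup of ${\rm L}_2(p)$ isomorphic to $A_4$, $S_4$ or $A_5$, and verify that each of these has depth at most $3$. One minor caveat: your parenthetical claim that the Borel and dihedral maximal subgroups cannot contain $A_4$ ``because both are metabelian'' is not a valid justification, since $A_4$ is itself metabelian (the correct reason is that these subgroups have cyclic Sylow $2$-subgroups, whereas $A_4$ does not); however, this aside is redundant given the congruence conditions you already quote from Dickson, so the argument stands.
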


\begin{proof}
The result is clear for $p\leqs 3$. And for $p\geqs 5$, ${\rm L}_2(p)$ has a maximal subgroup isomorphic to $A_4$, $S_4$ or $A_5$ (see \cite{Dix}), and it is easy to check that all of these groups have depth at most $3$.
\end{proof}

\begin{cor}\label{c:1}
If $p$ is prime, then $\l(A_{p+1}) \leqs 5$.
\end{cor}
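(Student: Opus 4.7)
The plan is, for each prime $p$, to exhibit a maximal subgroup $M$ of $A_{p+1}$ with $\l(M) \leqs 4$; by Lemma \ref{norm}(i) this gives $\l(A_{p+1}) \leqs 5$. The cases $p = 2,3$ are immediate since $A_3 \cong \mathbb{Z}/3\mathbb{Z}$ and $A_4$ are solvable of small depth, so I focus on $p \geqs 5$.

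The key input is the embedding ${\rm L}_2(p) \leqs A_{p+1}$ coming from the natural faithful $2$-transitive action of ${\rm L}_2(p)$ on the projective line $\mathbb{P}^1(\mathbb{F}_p)$ (the image lies in $A_{p+1}$, and not merely in $S_{p+1}$, because ${\rm L}_2(p)$ is simple for $p \geqs 5$). I would take $M$ to be any maximal subgroup of $A_{p+1}$ containing this copy of ${\rm L}_2(p)$. The generic situation is that ${\rm L}_2(p)$ is itself maximal in $A_{p+1}$ (a classical theorem of Galois), in which case Lemma \ref{lap1} immediately gives $\l(M) = \l({\rm L}_2(p)) \leqs 4$ and we are done.

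In the remaining (nongeneric) cases ${\rm L}_2(p)$ sits strictly inside $M$; since $M$ then inherits $2$-transitivity on $p+1$ points, the classification of finite $2$-transitive groups restricts $M$ to a short list, consisting of the Mathieu groups $M_{12}$ when $p=11$ and $M_{24}$ when $p=23$, together with affine overgroups ${\rm AGL}_d(q) \cap A_{p+1}$ when $p+1 = q^d$ is a prime power. In each case an unrefinable chain of length $4$ from $M$ to $1$ is written down explicitly by descending through ${\rm L}_2(p)$ or through another maximal subgroup of known small depth --- for instance $M_{12} > {\rm L}_2(11) > 11{:}5 > 11 > 1$ for $p = 11$, and $M_{24} > M_{23} > \cdots > 1$ for $p = 23$ using $\l(M_{23}) = 3$ from Table \ref{tab:main}. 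The main obstacle is just assembling and checking this finite list of exceptional overgroups, but the work is routine given the well-known maximal-subgroup structure of the Mathieu and affine groups involved.
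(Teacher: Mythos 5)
Your proposal is correct and follows essentially the same route as the paper: for $p \geqs 5$ you use the maximality of ${\rm L}_2(p)$ in $A_{p+1}$ (which fails only for $p \in \{7,11,23\}$) together with Lemma \ref{lap1}, and you dispose of the exceptional primes by explicit short chains through ${\rm AGL}_3(2)$, ${\rm M}_{12}$ and ${\rm M}_{24}$, exactly as the paper does via its chain $A_{24}>{\rm M}_{24}>{\rm M}_{23}>23{:}11>11>1$. The only cosmetic differences are that you re-derive the exceptional list from the classification of $2$-transitive groups rather than citing \cite{LPS}, and the attribution of the maximality statement to Galois is not quite right (his theorem concerns subgroups of index $p$ in ${\rm L}_2(p)$), but neither point affects the argument.
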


\begin{proof}
Again, the claim is clear if $p \leqs 3$, so assume $p \geqs 5$. If $p \not\in \{7,11,23\}$ then ${\rm L}_{2}(p)$ is a maximal subgroup of $A_{p+1}$ (see \cite{LPS}), so in these cases the result follows immediately from Lemma \ref{lap1}. For $p \in \{7,11,23\}$ it is easy to check that $\l(A_{p+1}) = 5$. For example,
\[
A_{24}>{\rm M}_{24}>{\rm M}_{23} > 23{:}11> 11 > 1
\]
is a $5$-chain.
\end{proof}

\begin{lem}\label{spor}
The depth of each sporadic simple group $G$ is given in Table \ref{tab:spor}. In particular, $\l(G) \leqs 6$, with equality if and only if $G = {\rm He}$.
\end{lem}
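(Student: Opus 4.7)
The plan is to carry out, for each of the 26 sporadic simple groups $G$, a direct computation of $\l(G)$ using the recursive formula $\l(G) = 1 + \min\{\l(M) : M \in \mathcal{M}\}$ from Lemma \ref{norm}(i) together with the classification of maximal subgroups of sporadic groups recorded in the ATLAS and its subsequent corrections. The lower bound $\l(G) \geqs 3$ is already supplied by Corollary \ref{d3}, so the substantive content is to exhibit, for each $G$, an unrefinable chain of the claimed length and to verify that no shorter chain exists.

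For the upper bound, I would proceed as follows. First, Theorem \ref{t:main1} already tells us that $\l(\mathrm{M}_{23}) = \l(\mathbb{B}) = 3$. For the remaining sporadic groups I would look for a maximal subgroup $M$ of small depth; typical useful choices are maximal subgroups $M$ which either (a) are themselves almost simple with a well-understood chain structure (for example, $\mathrm{M}_{23} < \mathrm{M}_{24}$, $\mathrm{M}_{22} < \mathrm{HS}$, $\mathrm{U}_3(5) < \mathrm{McL}$, etc.), so one can apply induction on the sporadic list together with Lemma \ref{lap1} and chains through classical groups, or (b) are soluble of small $\Omega$-value (such as normalisers of Sylow subgroups, e.g. a Frobenius group $p{:}r$ or a chain $p^{a}{:}H$), in which case $\l(M)$ can be read off from Lemma \ref{norm}(ii) and the structure of $M$. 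In each case the required chain is constructed by iterating Lemma \ref{norm}(i) down to a subgroup of prime order. For instance, a chain of length $6$ in $\mathrm{He}$ is obtained via $\mathrm{He} > S_4 \times L_3(2) > \cdots > 1$, using a chain of length $5$ in the maximal subgroup (the factor $L_3(2)$ has depth at most $4$ by Lemma \ref{lap1}, and $S_4$ contributes accordingly).

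For the lower bound, the key observation is Lemma \ref{norm}(i): to rule out $\l(G) \leqs t-1$, it suffices to show that every maximal subgroup $M$ of $G$ satisfies $\l(M) \geqs t - 1$. Running through the ATLAS list of maximal subgroups and applying Lemma \ref{norm}(ii) to the normal series of $M$ (and, where $M$ is almost simple or has an almost simple composition factor, invoking Corollary \ref{d3} on that factor) yields the required bound. For almost all sporadic groups this is immediate; the verification that $\l(G) \leqs 5$ for every sporadic $G$ other than $\mathrm{He}$ follows easily since each such $G$ has a maximal subgroup of depth at most $4$.

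The delicate point, which I expect to be the main obstacle, is proving equality $\l(\mathrm{He}) = 6$, i.e.\ that $\mathrm{He}$ admits no $5$-chain. This requires going through every maximal subgroup $M$ of $\mathrm{He}$ (there are ten conjugacy classes of such $M$, listed in the ATLAS: $S_4 \times L_3(2)$, $2^2.L_3(4).S_3$, $2^6{:}3.S_6$, $2^6{:}3.S_6$, $2^{1+6}.L_3(2)$, $7^2{:}2.L_2(7)$, $3.S_7$, $7^{1+2}{:}(3 \times S_3)$, $S_4 \times L_3(2)$, $7{:}3 \times L_3(2)$, etc.) and showing that each satisfies $\l(M) \geqs 5$. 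For each such $M$ I would apply Lemma \ref{norm}(ii) recursively along a composition series, using Corollary \ref{d3} on non-abelian simple sections and Lemma \ref{lap1} on $L_2(7) = L_3(2)$. The verification is finite but needs to be done case by case, carefully checking that the depths of the $p$-local maximals and the almost simple maximals are all at least $5$.
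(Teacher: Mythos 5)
Your overall strategy is exactly what the paper does: the published proof consists of the single sentence ``this is easily checked by inspecting the list of maximal subgroups in the {\sc Atlas}'', and your blueprint (recurse via Lemma \ref{norm}(i) through the known maximal subgroups, using Corollary \ref{d3}, Lemma \ref{lap1} and the depths already computed for small groups) is the natural way to carry that inspection out. So there is no difference of method to report.

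However, the one concrete verification you supply is wrong, and it is wrong precisely in the case that matters most. You claim $\l(S_4 \times L_3(2)) = 5$, giving a $6$-chain in ${\rm He}$. Since $S_4$ and $L_3(2)$ have no common nontrivial simple quotient, every maximal subgroup of $S_4 \times L_3(2)$ has the form $M \times L_3(2)$ or $S_4 \times N$ (there are no diagonal maximal subgroups), so by induction $\l(S_4 \times L_3(2)) = \l(S_4) + \l(L_3(2)) = 3 + 3 = 6$, and your chain has length $7$. A correct witness for $\l({\rm He}) \leqs 6$ must descend through a different maximal subgroup. A second, related weakness is in your lower-bound mechanism: Lemma \ref{norm}(ii) only gives $\l(M) \geqs \l(M/N)$, and there is no companion lower bound involving $\l(N)$ (indeed $\l(N.H)$ can be far smaller than $\l(N)$, e.g.\ for a Frobenius group $2^k{:}r$). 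So ``applying Lemma \ref{norm}(ii) along a composition series'' cannot certify $\l(M) \geqs 5$ for the maximal subgroups of ${\rm He}$; one must instead iterate Lemma \ref{norm}(i), which requires knowing the maximal subgroups of each $M$, of each of their maximal subgroups, and so on. The computation is still finite and routine, but as written your proposal neither exhibits a valid $6$-chain in ${\rm He}$ nor gives a tool strong enough to exclude a $5$-chain.
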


\begin{proof}
This is easily checked by inspecting the list of maximal subgroups in \cite{At}.
\end{proof}

\renewcommand{\arraystretch}{1.1}
\begin{table}[h]
\[
\begin{array}{lccccccccccccc} \hline\hline
G & {\rm M}_{11} & {\rm M}_{12} & {\rm M}_{22} & {\rm M}_{23} & {\rm M}_{24} & {\rm J}_{1} & {\rm J}_{2} & {\rm J}_{3} & {\rm J}_{4} & {\rm HS} & {\rm Suz} & {\rm McL} & {\rm Ru}  \\ \hline
\l(G) & 4 & 4 & 4 & 3 & 4 & 4 & 4 & 5 & 4 & 5 & 4 & 5 & 5  \\ \hline\hline
& & & & & & & & & & & & &   \\ \hline\hline
& {\rm He} & {\rm Ly} & {\rm O'N} & {\rm Co}_{1} & {\rm Co}_{2} & {\rm Co}_{3} & {\rm Fi}_{22} & {\rm Fi}_{23} & {\rm Fi}_{24}' & {\rm HN} & {\rm Th} & \mathbb{B} & \mathbb{M}   \\ \hline
& 6 & 4 & 5 & 5 & 4 & 4 & 5 & 4 & 4 & 5 & 4 & 3 & 4    \\ \hline\hline
\end{array}
\]
\caption{The depth of sporadic simple groups}
\label{tab:spor}
\end{table}
\renewcommand{\arraystretch}{1}

We are now in a position to prove our main theorems.

\subsection{Proof of Theorem \ref{t:main1}}

Let $G = G_0>G_1>G_2>G_3=1$ be a $3$-chain, so each $G_i$ is maximal in $G_{i-1}$.
Then $G_2$ has prime order $r$, say, and by Lemma \ref{frob}, either $G_1$ is Frobenius or $G_2\normeq  G_1$.

If $G_1$ has odd order, then it is given by \cite[Theorem 1]{LSax} and the relevant cases are recorded in Table \ref{tab:main}. Now assume $|G_1|$ is even.

Suppose $G_1 = NG_2 = N.r$ is Frobenius. As $G_2$ is maximal in $G_1$, $N$ is elementary abelian and thus one of the following holds:
\begin{itemize}\addtolength{\itemsep}{0.2\baselineskip}
\item[(a)] $N = 2^k$ and $G_2=r$ acts fixed point freely on $N$;
\item[(b)] $|N| = s$ is prime, $r=2$ and $G_1$ is dihedral.
\end{itemize}
The finite simple groups $G$ with a maximal subgroup $G_1$ of the form $2^k.r$ or $D_{2s}$ can be determined by inspection of  \cite{KL} (for classical groups), \cite{CLSS} (for exceptional groups of Lie type), \cite{At} (for sporadic groups), and is elementary for alternating groups. The examples are listed in Table \ref{tab:mm} and they also appear in Table \ref{tab:main}.

\renewcommand{\arraystretch}{1.1}
\begin{table}[h]
\[
\begin{array}{lll} \hline\hline
G & G_1 & \hspace{5mm} \mbox{Conditions} \\ \hline
{\rm L}_2(2^k) & 2^k.(2^k-1) & \hspace{5mm} \mbox{$2^k-1$ prime} \\
                    & D_{2(2^k \pm 1)} &  \hspace{5mm} \mbox{$2^k \pm 1$ prime} \\
{\rm L}_2(q) & D_{q \pm 1} & \hspace{5mm} \mbox{$(q \pm 1)/2$ prime, $q \ne 9$} \\
 & A_4 & \left\{\begin{array}{l}
\mbox{$q$ prime and either $q=5$ or $q \equiv \pm 3, \pm 13 \imod{40}$; or} \\
\mbox{$q=3^a$ with $a \geqs 3$ prime}
\end{array}\right. \\
{}^2B_2(q) & D_{2(q-1)} & \hspace{5mm} \mbox{$q-1$ prime} \\ \hline\hline
\end{array}
\]
\caption{The simple groups $G$ with a maximal subgroup $G_1$ of the form $2^k.r$ or $D_{2r}$, with $r$ prime}
\label{tab:mm}
\end{table}
\renewcommand{\arraystretch}{1}

Finally, let us assume $G_2\normeq G_1$, so $G_1/G_2$ has prime order $t$, say. Then $G_1$ is non-abelian by Lemma \ref{soph}. Since $|G_1|$ is even, it follows that $t=2$ and $G_1 = D_{2r}$ is dihedral. This case was dealt with in (b) above. \hspace *{\fill}$\Box$

\vs

By combining Theorem \ref{t:main1} and Lemma \ref{lap1}, we obtain the following corollary.

\begin{cor}\label{c:l2p}
If $p$ is an odd prime, then
$$\l({\rm L}_{2}(p)) = \left\{\begin{array}{ll}
2 & \mbox{$p=3$} \\
3 & \mbox{$p \geqs 5$ and either $(p-1)/2$ or $(p+1)/2$ prime,} \\
& \mbox{or $p \equiv \pm 3, \pm 13 \imod{40}$}  \\
4 & \mbox{otherwise.}
\end{array}\right.$$
\end{cor}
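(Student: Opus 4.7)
The plan is to reduce the corollary to a direct specialization of Theorem \ref{t:main1}, using Lemma \ref{lap1} for the upper bound and Corollary \ref{d3} for the lower bound. The degenerate case $p=3$ is handled by hand.

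First I would dispose of $p=3$. Here ${\rm L}_2(3) \cong A_4$ is not simple; the chain $A_4 > \la(1\,2\,3)\ra > 1$ is unrefinable of length $2$ (the subgroup of order $3$ is maximal since its index is $4$, and it has prime order), while $\l(A_4) \geqs 2$ as $A_4$ is not of prime order. Hence $\l({\rm L}_2(3)) = 2$.

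Now assume $p \geqs 5$, so $G = {\rm L}_2(p)$ is simple. Corollary \ref{d3} gives $\l(G) \geqs 3$ and Lemma \ref{lap1} gives $\l(G) \leqs 4$, so $\l(G) \in \{3,4\}$ and it only remains to decide which. For this I would apply Theorem \ref{t:main1}: $\l(G) = 3$ if and only if the conditions listed on the ${\rm L}_2(q)$ line of Table \ref{tab:main} hold with $q = p$. Since $p$ is an odd prime $\geqs 5$, we have $(2, p-1) = 2$, and the hypothesis $q \ne 9$ is automatic; the third clause $q = 3^k$ with $k \geqs 3$ prime cannot occur since $p$ is itself prime. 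Thus the conditions reduce to: $(p-1)/2$ or $(p+1)/2$ is prime, or $p \equiv \pm 3, \pm 13 \imod{40}$, which is precisely the condition for the value $3$ in the statement. All other odd primes $p \geqs 5$ fall into the residual case with $\l({\rm L}_2(p)) = 4$.

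There is no real obstacle beyond bookkeeping: the entire content is packed into Theorem \ref{t:main1}, and one only needs to verify that no other line of Table \ref{tab:main} can coincide with ${\rm L}_2(p)$ for $p$ prime (which is clear since each line is indexed by a distinct family). The only mildly delicate aspect is making sure the small primes $p = 5, 7, 11, 13$ fit both the table and the stated conclusion; these can be spot-checked against the maximal subgroup data of Dickson used in Lemma \ref{lap1}.
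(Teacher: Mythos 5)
Your proposal is correct and follows exactly the paper's route: the paper derives this corollary simply ``by combining Theorem \ref{t:main1} and Lemma \ref{lap1}'', which is precisely your argument (with the lower bound from Corollary \ref{d3} and the $p=3$ case done by hand). Your aside about exceptional isomorphisms with other lines of Table \ref{tab:main} (e.g.\ ${\rm L}_2(5)\cong A_5$, ${\rm L}_2(7)\cong {\rm L}_3(2)$) is a legitimate point to check, and in each such case the conditions agree, so nothing is lost.
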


This can be extended as follows.

\begin{prop}\label{p:l2q}
Let $p$ be a prime and let $k \geqs 1$ be an odd integer. Suppose $(p,k) \ne (2,1)$ and let $\pi(k)$ be the set of prime divisors of $k$. Then
$$\l({\rm L}_{2}(p^k)) = \left\{\begin{array}{ll}
\Omega(k)+ 1 + \min\{\Omega(2^r \pm 1) \,:\, r \in \pi(k)\} & \mbox{if $p=2$} \\
\Omega(k)+ \l({\rm L}_{2}(p)) & \mbox{if $p \geqs 3$.}
\end{array}\right.$$
\end{prop}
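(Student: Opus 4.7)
The plan is to prove both inequalities by induction on $\Omega(k)$, using Dickson's classification \cite{Dix} of the maximal subgroups of ${\rm L}_2(p^k)$ together with the recursion $\l(G) = 1 + \min_{M}\l(M)$ from Lemma \ref{norm}(i). Two auxiliary formulas will be invoked repeatedly: $\l(D_{2n}) = 1 + \Omega(n)$ for odd $n \geqs 1$ (easy induction via the maximal subgroups $C_n$ and $D_{2n/q}$, $q \in \pi(n)$); and $\l(p^k{:}C_m) = 1 + \Omega(m)$ whenever $m \geqs 2$ and $C_m$ acts fixed-point-freely on $(\mathbb{F}_{p^k})^{+}$, proved similarly using that the maximal subgroups of such a Frobenius group are its cyclic complements, the smaller Frobenius subgroups $p^k{:}C_{m/s}$ for $s \in \pi(m)$, and $p^k$ itself (only when $m$ is prime).

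For the upper bound one exhibits explicit chains. In odd characteristic we descend through maximal subfield subgroups
\[
{\rm L}_2(p^k) > {\rm L}_2(p^{k/s_1}) > \cdots > {\rm L}_2(p^{k/(s_1\cdots s_m)}) = {\rm L}_2(p),
\]
for a factorisation $k = s_1 \cdots s_m$ into odd primes, and then append an optimal chain in ${\rm L}_2(p)$ of length $\l({\rm L}_2(p))$, giving total length $\Omega(k) + \l({\rm L}_2(p))$. In characteristic $2$ the subfield descent must terminate at ${\rm L}_2(2^r)$ for some prime $r \in \pi(k)$ (since ${\rm L}_2(2)$ is non-simple), and one continues through the dihedral maximal subgroup $D_{2(2^r \pm 1)}$ and its cyclic rotation subgroup $C_{2^r \pm 1}$. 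Choosing $r$ and the sign to minimise $\Omega(2^r \pm 1)$ yields the claimed upper bound.

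For the lower bound, we induct on $\Omega(k)$ and control each type of maximal subgroup provided by Dickson; for $k$ odd with $k \geqs 3$ the sporadic classes $A_4, S_4, A_5$ do not arise, so the only cases are subfields, the two dihedrals, and the Borel. The subfield case is handled by the induction hypothesis combined with the containment $\pi(k/s) \subseteq \pi(k)$. The dihedral case reduces to the cyclotomic inequality
\[
\Omega(p^k \pm 1) \;\geqs\; \Omega(k) - 1 + \Omega(p^r \pm 1) \qquad (r \in \pi(k)),
\]
proved by writing $p^k - 1 = \prod_{d \mid k}\Phi_d(p)$ and (for $k$ odd) $p^k + 1 = \prod_{d \mid k}\Phi_{2d}(p)$, noting $\Phi_d(p) > 1$ so $\Omega(\Phi_d(p)) \geqs 1$ for every $d \geqs 2$, and using the elementary estimate $\tau(k) \geqs \Omega(k) + 1$.

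The main obstacle is the Borel $B = p^k{:}(p^k-1)/d$, whose depth is not immediately transparent: via the cyclic complement one only sees $\l(B) \leqs 1 + \Omega((p^k-1)/d)$, which matches the bound coming from the dihedral. The plan is to apply the second auxiliary formula above to obtain the exact value $\l(B) = 1 + \Omega((p^k-1)/d)$; the same cyclotomic estimate then delivers the required $\l(B) \geqs \Omega(k) + a$, where $a$ is the base term from the claimed formula. Combining all cases with the base cases ($k = 1$ for $p$ odd, trivial; $k$ an odd prime for $p = 2$, settled by direct examination of the Borel and the two dihedrals) closes the induction and establishes the equality.
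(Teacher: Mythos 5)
Your proposal follows essentially the same route as the paper: induction on $\Omega(k)$ via Dickson's list of maximal subgroups, computing the depths of the Borel and the two dihedral subgroups exactly and showing the subfield subgroup realises the minimum by means of the cyclotomic inequality $\Omega(p^k \pm 1) \geqs \Omega(k) + \Omega(p \pm 1)$ (which the paper asserts without proof, and you justify via the factorisation into $\Phi_d(p)$'s). The one step you gloss over is that for odd $p$ the cyclotomic estimate alone is not enough: one must also invoke Corollary \ref{c:l2p} to check that $\Omega(p \pm 1) \geqs \l({\rm L}_{2}(p)) - 1$ in every congruence case, so that the Borel and dihedral branches are genuinely no shorter than the subfield branch -- this is precisely how the paper closes that case.
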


\begin{proof}
First assume that $p$ is odd. The proof goes by induction on $k$, the case $k=1$ being trivial.
Now suppose $k>1$ and let $G = {\rm L}_2(p^k)$. By \cite{Dix}, the maximal subgroups of $G$ are as follows:
\begin{equation}\label{maxsub}
p^k.((p^k-1)/2),\; D_{p^k \pm 1},\; {\rm L}_{2}(p^{k/s}),
\end{equation}
where $s$ is a prime divisor of $k$, and it is easy to see that
\[
\l(p^k.((p^k-1)/2)) = \l(D_{p^k-1}) = \O(p^k-1),\;\; \l(D_{p^k+1}) = \O(p^k+1).
\]
By induction, $\l({\rm L}_2(p^{k/s})) = \O(k)-1+\l({\rm L}_2(p))$.
Since $\O(p^k\pm 1) \geqs \O(p \pm 1) + \O(k)$, it follows from Corollary \ref{c:l2p} that among the maximal subgroups in (\ref{maxsub}), ${\rm L}_2(p^{k/s})$ has minimal depth. Hence
\[
\l({\rm L}_2(p^k)) = 1+\l({\rm L}_2(p^{k/s})) = \O(k)+\l({\rm L}_2(p)),
\]
and the proof is complete.

Now assume $p=2$. This time we induct on $\O(k)$. For the base case $\O(k)=1$, $k$ is prime and the maximal subgroups of
${\rm L}_2(2^k)$ are
\begin{equation}\label{maxsub2}
2^k.(2^k-1),\; D_{2(2^k\pm 1)}.
\end{equation}
We have
$\l(2^k.(2^k-1)) = \l(D_{2(2^k-1)}) = \O(2^k-1)+1$ and $\l(D_{2(2^k+1)}) = \O(2^k+1)+1$, and
the conclusion follows for $k$ prime. For $k$ non-prime (i.e.  $\O(k)>1$), the maximal subgroups of ${\rm L}_2(p^k)$ are as in \eqref{maxsub2}, together with ${\rm L}_2(2^{k/s})$ for $s \in \pi(k)$, and an induction argument very similar to the one for $p$ odd gives the conclusion.
\end{proof}

\begin{rem}
A similar result can be established for $\l({\rm L}_{2}(p^k))$ when $k$ is even, but the details are  more complicated (see Proposition \ref{p:suz} for the case $p=2$).
\end{rem}

\subsection{Proof of Proposition \ref{p:main8}}

The proof combines Proposition \ref{p:l2q} above with \cite[Theorem A]{SST}.
The latter result shows that, for a finite simple Lie type group $G_r(p^k)$ of rank $r$
with a Borel subgroup $B$ we have $l(G_r(p^k)) = r + l(B)$ provided $k \geqs F(p,r)$.

For $i \geqs 1$ let $H_i = {\rm L}_{2}(3^{3^i})$ and let $B_i < H_i$ be a Borel subgroup.
It follows from the above mentioned result that, for some constant $c > 0$ we have
\[
l(H_i) = 1 + l(B_i)
\]
for all $i > c$. Now, let $P_i < B_i$ be a Sylow $3$-subgroup of $H_i$.
Since $B_i$ is solvable we have
\[
l(B_i) = \O(|B_i|) = \O((3^{3^i}-1)/2) + \O(|P_i|) = \O((3^{3^i}-1)/2) + 3^i.
\]
Note that $(3^{3^i}-1)/2$ is not divisible
by primes less than $7$. Hence $\O((3^{3^i}-1)/2) \leqs \log_7 ((3^{3^i}-1)/2) < 3^i \log_7{3}$.
This yields
\[
l(H_i) < 1 + 3^i(1 + \log_7{3}) = 1 + 3^i \log_7 {21} <3^{i+1}
\]
for all $i > c$.

Next, Proposition \ref{p:l2q} shows that
\[
\l(H_i) = \O(3^i) + \l({\rm L}_{2}(3)) = i+2.
\]
Hence, for $i > c$, we have
\[
\l(H_i) = i + 2 > \log_3 l(H_i) + 1.
\]
Setting $G_j = H_{j+c}$ for $j \geqs 1$, we complete the proof.
\hspace *{\fill}$\Box$

\subsection{Proof of Theorem \ref{t:main2}}

Let $G=A_n$. If $n \leqs 10$ then it is easy to check that $\l(G) \leqs 5$, so let us assume $n \geqs 11$.
By Vinogradov's Theorem \cite{Vin}, every sufficiently large odd integer $n$ is the sum of three primes, and this has recently been extended to all odd $n \geqs 7$ by Helfgott \cite{H}. Set $\d = 1$ or 0 according as $n$ is odd or even, and choose primes $p_1,p_2,p_3$ such that
\[
n-3-\d = p_1+p_2+p_3,
\]
so
\[
A:= A_{p_1+1}\times A_{p_2+1}\times A_{p_3+1}  < A_{p_1+p_2+p_3+3} = A_{n-\d} \leqs  G.
\]

We claim that there is an unrefinable chain of length at most $8$ from $G$ to $A$. To see this, first observe that the stabilizer in $A_d$ of a $k$-element subset of $\{1, \ldots, d\}$ (with $2 \leqs k \leqs d/2$) is a subgroup of the form $(A_k \times A_{d-k}).2$. Moreover, if $k \ne d/2$ then this is a  maximal subgroup by \cite{LPS}, so there is an unrefinable chain of length $2$ from $A_d$ to $A_k \times A_{d-k}$. If $k=d/2$, then there is one of length $3$, namely
\[
A_d > (A_{d/2} \times A_{d/2}).2^2 > (A_{d/2} \times A_{d/2}).2 > A_{d/2} \times A_{d/2}.
\]
Now, if $n-\delta \ne 2(p_1+1)$ and $p_2 \ne p_3$, then
\[
A_{n-\d} > (A_{p_1+1} \times A_{n-\d-p_1-1}).2 > A_{p_1+1} \times A_{n-\d-p_1-1}  > A_{p_1+1} \times (A_{p_2+1} \times A_{p_3+1}).2 > A
\]
is an unrefinable chain of length $4$. Since $A_n > S_{n-1} > A_{n-1}$ is unrefinable, it follows that there is an unrefinable chain of length at most $6$ from $G$ to $A$. Similarly, if either $n-\delta = 2(p_1+1)$ or $p_2 = p_3$, then we can find a chain of length at most $8$. This justifies the claim.

Finally, since $\l(A_{p_i+1}) \leqs 5$ by Corollary \ref{c:1}, we conclude that
\[
\l(G) \leqs 8 + 3\cdot 5 = 23\]
and the proof of Theorem \ref{t:main2} is complete. \hspace *{\fill}$\Box$

\subsection{Proof of Theorem \ref{t:main3}}

Let $n$ be a positive integer and let $p_1,\ldots, p_n$ be distinct odd primes. Set $k = p_1\cdots p_n$. Then Proposition \ref{p:l2q} gives $\l({\rm L}_{2}(2^k)) \geqs \O(k)+2 = n+2$ and the result follows.  \hspace *{\fill}$\Box$

\subsection{Proof of Theorem \ref{t:main4}}

Let $G=G(q)$ be a finite simple group of Lie type over $\mathbb{F}_{q}$, where $q=p^k$ for a prime $p$. To begin with, let us assume that $G$ is not one of the following:
\begin{itemize}\addtolength{\itemsep}{0.2\baselineskip}
\item[(a)] ${\rm L}_{2}(2^k)$ with $k \geqs 2$;
\item[(b)] ${}^2B_2(2^k)$ with $k \geqs 3$ odd;
\item[(c)] ${\rm U}_{n}(2^k)$ with $n$ odd and $k$ even.
\end{itemize}
We will handle these special cases at the end of the proof.

In the following, unless stated otherwise, the assertions concerning the unrefinability of chains follow from the maximality results in \cite{BHR, KL} for classical groups and \cite{LSS} for exceptional groups. Our goal is to verify the bound
\begin{equation}\label{e:bd}
\l(G) \leqs 3\O(k)+36.
\end{equation}

\vs

\no \emph{Case 1. Untwisted groups.}

\vs

First assume $G = G(q)$ is of untwisted type (excluding (a) above). For any prime divisor $r$ of $k$, $G(q)$ has a maximal subfield subgroup of the form $G(q^{k/r}).[\d]$, where $\d \in \{1,r,2r\}$ (see \cite[Theorem 1]{BGL}). We deduce that there is an unrefinable chain of length at most $3\O(k)$ from $G$ to $G(p)$, and hence
\begin{equation}\label{trivi}
\l(G) \leqs 3\O(k)+\l(G(p)).
\end{equation}

We now consider the possibilities for $G(p)$. First assume $G(p) = \O_n(p)$, with $np$ odd and $n \geqs 7$. If $n\ne 7$ and $(n+1,p)=1$ then Lemma \ref{max} implies that $G(p)$ has a maximal alternating or symmetric subgroup, in which case $\l(G) \leqs 3\O(k)+25$ by Theorem \ref{t:main2}. Now assume $p$ divides $n+1$. Then
\[
\O_n(p)>\O_{n-1}^{+}(p).2 > \O_{n-1}^{+}(p) > \O_{n-2}(p).2 > \O_{n-2}(p)
\]
is an unrefinable chain of length $4$. Moreover, $(n-1,p)=1$ so $\O_{n-2}(p)$ has a maximal alternating or symmetric subgroup. This gives $\l(G) \leqs 3\O(k)+4+25$ as required. Finally, for $n=7$ there is an unrefinable chain
$\O_7(p) > {\rm Sp}_6(2) > S_8$, and the conclusion follows easily.

Next assume $G(p) = {\rm P\O}_{2n}^{+}(p)$, where $n \geqs 4$ and $p$ is odd. Then $G(p)$ has a maximal subgroup of the form $\O_{n-1}(p).r$ with $r \in \{1,2\}$, so by applying the bound in the previous paragraph we get $\l(G) \leqs 3\O(k)+29+2$.

Now suppose $G(p) = {\rm Sp}_{2n}(2)'$. It is easy to check that the groups ${\rm Sp}_{4}(2)' \cong A_6$ and ${\rm Sp}_{6}(2)$ have depth $4$ and $5$, respectively, so we may assume $n\geqs 4$. If $n$ is even then Lemma \ref{max} implies that $G(p)$ has a maximal symmetric subgroup. On the other hand, if $n$ is odd then
\[
{\rm Sp}_{2n}(2) > {\rm Sp}_{2n-2}(2) \times {\rm Sp}_{2}(2) > {\rm Sp}_{2n-2}(2) \times 3 > {\rm Sp}_{2n-2}(2)
\]
is an unrefinable chain and ${\rm Sp}_{2n-2}(2)$ has a maximal symmetric subgroup (again, by Lemma \ref{max}). In both cases, we conclude that $\l(G) \leqs 3\O(k)+3+25$, so \eqref{e:bd} holds. Moreover, for $G(p) = \O_{2n}^{+}(2)$ we get $\l(G) \leqs 3\O(k)+29$ because ${\rm Sp}_{2n-2}(2)$ is a maximal subgroup of $G(p)$.

Next consider $G(p) = {\rm PSp}_{2n}(p)$ with $p$ odd and $n\geqs 2$. Here $G(p)$ has a maximal imprimitive subgroup $M = ({\rm Sp}_2(p) \wr S_n)/Z$, where $Z = Z({\rm Sp}_{2n}(p)) = \{\pm I_{2n}\}$.

First we claim that there is an unrefinable chain
\begin{equation}\label{unref}
{\rm Sp}_2(p) \wr S_n  = M_0 > M_1> \cdots > M_s = C_6 \wr S_n
\end{equation}
of length $s \leqs 3$. If $p \equiv \pm 1 \imod{10}$, then $2.A_5$ is a maximal subgroup of ${\rm Sp}_{2}(p)$ and we can take
\begin{equation}\label{e:chain2}
{\rm Sp}_2(p) \wr S_n  > (2.A_5) \wr S_n  > (2.A_4) \wr S_n  > C_6 \wr S_n.
\end{equation}
To see that this is unrefinable, consider a subgroup $K$ such that
\[
H=C_6 \wr S_n < K \leqs L=(2.A_4) \wr S_n.
\]
Then $K \cap (2.A_4)^n \leqs (2.A_4)^n$ is a subdirect product containing $(C_6)^n$, so $C_6 \leqs K \cap L_i \normeq L_i$, where $L_i$ is the $i$-th copy of $2.A_4$ in the direct product $(2.A_4)^n$. Therefore $K \cap L_i = L_i$, so $K$ contains $(2.A_4)^n$
and thus $K=L$. A similar argument establishes the maximality of the other inclusions in \eqref{e:chain2} and we omit the details. If $p \not\equiv \pm 1 \imod{10}$ then either $2.S_4$ or $2.A_4$ is maximal in ${\rm Sp}_{2}(p)$ and the details are very similar. This establishes the claim (\ref{unref}).

Finally, we claim that there is an unrefinable chain
\[
C_6 \wr S_n = H_0 > H_1 > \cdots > H_t = 2.S_n = Z.S_n
\]
of length $t \leqs 5$. For example, if $n \equiv 0 \imod{6}$ then
\[
C_6 \wr S_n > 3^{n-1}.2^n.S_n > 3.2^n.S_n> C_2 \wr S_n > 2^{n-1}.S_n > 2.S_n
\]
is an unrefinable chain of length $5$. Here we are using the fact that the only proper nontrivial $S_n$-invariant subgroups of $r^n$ ($r$ prime) are $U \cong r^{n-1}$ and $W \cong C_r$ (note that $U$ and $W$ are the subspaces in \eqref{e:fd}, setting $p=r$). Similarly, there is a chain of length $5$ if $n \equiv \pm 2,3 \imod{6}$, and one of length $4$ if $n \equiv \pm 1 \imod{6}$. We deduce that there is an unrefinable chain of length at most $8$ from $G(p)$ to $S_n$, whence $\l(G) \leqs 3\O(k)+8+24$ by Theorem \ref{t:main2}.

To complete the proof for untwisted classical groups, suppose $G(p) = {\rm L}_n(p)$. The case $n=2$ follows from Lemma \ref{lap1}, so assume $n\geqs 3$. If $n$ is even then $G(p)$ has a maximal subgroup $M={\rm PSp}_{n}(p).r$ with $r \in \{1,2\}$ and our earlier work shows that $\l(M) \leqs 33$. Now assume $n$ is odd. If $p$ is odd then $G(p)$ has a maximal subgroup $M={\rm PSO}_{n}(p) = \O_n(p).2$ and the result follows since $\l(M) \leqs 30$ as above. Finally, suppose $n$ is odd and $p=2$. In this case, there is an unrefinable chain
\[
G(2) = {\rm SL}_n(2)  > 2^{n-1}.{\rm SL}_{n-1}(2) > {\rm SL}_{n-1}(2)
\]
and so the previous argument gives $\l(G) \leqs 3\O(k)+36$.

Now suppose $G(p)$ is of exceptional Lie type. In each case, we can choose a maximal subgroup $M$ as follows (see \cite{LSS}):
\[
\begin{array}{llllll}
\hline
G(p) & E_8(p) & E_7(p) & E_6(p) & F_4(p) & G_2(p) \\
\hline
M & d.{\rm P\O}_{16}^{+}(p).d & {\rm L}_{2}(p^7).[7d] & F_4(p) & d.\O_9(p) & {\rm SL}_{3}(p).2 \\
\hline
\end{array}
\]
where $d=(2,p-1)$. In each case, the desired bound quickly follows from our above analysis of untwisted classical groups. For example, if $G(p)=E_8(p)$ then
\[
\l(G(p)) \leqs 3+\l({\rm P\O}_{16}^{+}(p)) \leqs 3+31.
\]
Similarly, suppose $G(p) = E_7(p)$ and $M = {\rm L}_{2}(p^7).[7d]$. If $p=2$ then
\[
E_7(2) > {\rm L}_{2}(2^7).7 > {\rm L}_{2}(2^7) > D_{2(2^7-1)} > C_{2^7-1} > 1
\]
is an unrefinable chain. For odd $p$, there is an unrefinable chain from $E_7(p)$ to ${\rm L}_{2}(p)$ of length $4$, and Lemma \ref{lap1} implies that $\l({\rm L}_{2}(p)) \leqs 4$. The other cases are similar and we omit the details.

\vs

\no \emph{Case 2. Twisted groups.}

\vs

Now let us consider the twisted groups of Lie type, excluding the cases labelled (b) and (c) above. First assume $G = {}^2G_2(3^k)$ with $k$ odd. Taking a chain of subfield subgroups of length $\O(k)$, we can get down to ${}^2G_2(3) \cong {\rm L}_{2}(8).3$. The latter has depth $4$, so $\l(G) \leqs \O(k)+4$. Similarly, $\l({}^2F_4(2^k)') \leqs \O(k)+5$.

In each of the remaining cases, the goal is to find a short unrefinable chain from $G$ to a simple untwisted group of Lie type $H$, and then apply the bounds in Case 1.

Suppose $G = {\rm U}_{n}(q)$ is a unitary group. If $n \geqs 4$ is even then there is an unrefinable chain of length at most $2$ from $G$ to $H = {\rm PSp}_{n}(q)$, so $\l(G) \leqs 2+3\O(k)+32$. Similarly, if $nq$ is odd then $H = \O_{n}(q)$ and the same bound holds. If $n$ is odd and $q=2^k$ with $k$ odd then we can use maximal subfield subgroups to find an unrefinable chain of length at most $3\O(k)$ from $G$ to $H={\rm U}_{n}(2)$. If $n=3$ then $\l(H) = 4$, so we can assume $n \geqs 5$. Now $H$ has a maximal subgroup $a.{\rm U}_{n-1}(2).b$, where $a=3/(3,n)$ and $b=(3,n-1)$, so $\l(H) \leqs \l({\rm U}_{n-1}(2))+2 \leqs 36$ as above and thus $\l(G) \leqs 3\O(k)+36$.

For $G = {\rm P\O}_{2n}^{-}(q)$ with $q$ odd, there is an unrefinable chain of length at most $2$ from $G$ to $\O_{n-1}(q)$ and the result quickly follows. The case $G = \O_{2n}^{-}(q)$ with $q$ even is also easy since ${\rm Sp}_{2n-2}(q)$ is a maximal subgroup.

Finally, if $G = {}^2E_6(q)$ or ${}^3D_4(q)$, then $G$ has a maximal subgroup $F_4(q)$ or $G_2(q)$, respectively, and the result follows from the bounds on $\l(F_4(q))$ and $\l(G_2(q))$ in Case 1.

\vs

\no \emph{Case 3. The remaining cases.}

\vs

To complete the proof, we may assume that one of the following holds:
\begin{itemize}\addtolength{\itemsep}{0.2\baselineskip}
\item[(a)] $G = {\rm L}_{2}(2^k)$ with $k \geqs 2$;
\item[(b)] $G = {}^2B_2(2^k)$ with $k \geqs 3$ odd;
\item[(c)] $G = {\rm U}_{n}(2^k)$ with $n$ odd and $k$ even.
\end{itemize}

First suppose $G = G(2^k)$ is of type ${\rm L}_{2}(2^k)$ or ${}^2B_2(2^k)$. Let $\pi(k)$ be the set of prime divisors of $k$. For any $r \in \pi(k)$, there is an unrefinable chain of subfield subgroups of length $\O(k)-1$ from $G$ to $G(2^r)$. Now $G(2^r)$ has a maximal subgroup $H = D_{2(2^r-1)}$ and $\l(H) \leqs 1+\O(2^r-1)$, so
\[
\l(G) \leqs \O(k) + 1+\min\{\O(2^r - 1) : r \in \pi(k)\}
\]
as required (see Proposition \ref{p:suz} below for the exact depth of $G$ in these two cases).

Finally, let us turn to case (c), so $G = G(2^k) = {\rm U}_{n}(2^k)$ with $n$ odd and $k$ even. Write $k = 2^ab$, where $a \geqs 1$ and $b$ is odd, so $\O(k) = a+\O(b)$. By considering subfield subgroups, there is an unrefinable chain of length at most $3\O(b)$ from $G$ to $G(2^{2^a})$. Now $G(2^{2^a})$ has a maximal reducible subgroup $H = c.{\rm PGU}_{n-1}(2^{2^a})$ where $c$ divides $2^{2^a}+1$, so
\[
\l(H) \leqs 2\O(2^{2^a}+1)+\l({\rm U}_{n-1}(2^{2^a})) \leqs 2\O(2^{2^a}+1)+34+3\O(2^a)
\]
and thus
\[
\l(G) \leqs 3\O(b)+2\O(2^{2^a}+1)+35+3a \leqs 3\O(k)+2\O(2^{2^a}+1)+35.
\]

\vs

This completes the proof of Theorem \ref{t:main4}.
\hspace *{\fill}$\Box$

In fact, we can determine the exact depth of $G$ in cases (a) and (b) above.

\begin{prop}\label{p:suz}
We have
$$\l({\rm L}_{2}(2^k)) = \left\{\begin{array}{ll}
\Omega(k)+ 1 + \min\{\Omega(2^r \pm 1) \,:\, r \in \pi(k)\} & \mbox{$k \geqs 3$ odd} \\
\Omega(k)+ 2 + \min\{\Omega(2^{2^c} \pm 1) - c \,:\, 1 \leqs c \leqs a \} & \mbox{$k=2^ab$ even, $b$ odd}
\end{array}\right.$$
and
$$\l({}^2B_2(2^k)) = \O(k)+1+\min\{\O(2^r-1),\O(2^r \pm \sqrt{2^{r+1}} + 1)+1 \,:\, r \in \pi(k)\},$$
where $\pi(k)$ is the set of prime divisors of $k$.
\end{prop}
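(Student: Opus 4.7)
The plan is to prove both parts by induction on $\O(k)$, using the maximal subgroup structures of ${\rm L}_2(2^k)$ and ${}^2B_2(2^k)$ (due to Dickson and Suzuki respectively) and the reduction $\l(G) = 1 + \min_M \l(M)$ from Lemma \ref{norm}(i). For ${\rm L}_2(2^k)$ with $k$ odd, the formula is exactly the $p = 2$ case of Proposition \ref{p:l2q} and so is already proved.

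For ${\rm L}_2(2^k)$ with $k = 2^a b$ ($a \geqs 1$, $b$ odd), the maximal subgroups are the Borel $B = 2^k{:}(2^k-1)$, the two dihedrals $D_{2(2^k \pm 1)}$, and the subfield subgroups ${\rm L}_2(2^{k/s})$ for $s \in \pi(k)$; as in the proof of Proposition \ref{p:l2q}, $\l(B) = \l(D_{2(2^k-1)}) = \O(2^k-1)+1$ and $\l(D_{2(2^k+1)}) = \O(2^k+1)+1$. Setting $M_a := \min\{\O(2^{2^c} \pm 1) - c : 1 \leqs c \leqs a\}$, the upper bound $\O(k)+2+M_a$ is realised by picking $c_0$ and a sign achieving $M_a$, descending via $\O(k) - c_0$ subfield steps to ${\rm L}_2(2^{2^{c_0}})$, then one step to the dihedral $D_{2(2^{2^{c_0}} \pm 1)}$, and then $\O(2^{2^{c_0}} \pm 1)+1$ steps through the dihedral. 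For the matching lower bound, induction on $\O(k)$ handles the subfield branches: odd $s \in \pi(b)$ yields exactly the target, $s = 2$ with $a \geqs 2$ yields $\O(k)+2+M_{a-1} \geqs \O(k)+2+M_a$, and $s = 2$ with $a = 1$ reduces to Proposition \ref{p:l2q} and is checked directly using $\O(2^r \pm 1) \geqs 1$ for any $r \geqs 1$.

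The main obstacle is controlling the Borel and dihedral branches at level $k$, i.e.\ establishing $\O(2^k \pm 1) \geqs \O(k) + M_a$. I plan to use the factorisations
\[
2^k - 1 = (2^{2^a}-1)\prod_{d \mid b,\, d>1} \Phi_d(2^{2^a}), \qquad 2^k + 1 = (2^{2^a}+1)\prod_{d \mid b,\, d>1} \Phi_{2d}(2^{2^a})
\]
(the second valid because $b$ is odd). Each cyclotomic factor is $\geqs 2$ and so contributes at least one prime, giving $\O(2^k \pm 1) \geqs \O(2^{2^a} \pm 1) + (\tau(b)-1) \geqs \O(2^{2^a} \pm 1) + \O(b)$, where $\tau(b)$ denotes the number of divisors; combined with $M_a \leqs \O(2^{2^a} \pm 1) - a$ (taking $c = a$) and $\O(k) = a + \O(b)$, this yields the required inequality.

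The Suzuki case ${}^2B_2(2^k)$ proceeds by the same inductive strategy. The maximal subgroups are the Borel and $D_{2(2^k-1)}$ (both of depth $\O(2^k-1)+1$), the two torus normalisers $C_{2^k \pm \sqrt{2^{k+1}}+1}{:}4$ (of depth $\O(2^k \pm \sqrt{2^{k+1}}+1)+2$), and the subfield subgroups ${}^2B_2(2^{k/s})$ for primes $s \mid k$ when $k$ is composite. Subfield branches are handled by induction using $\min_{r \in \pi(k/s)} \geqs \min_{r \in \pi(k)}$, and the Borel/dihedral branches by the cyclotomic argument above. The most delicate step is the torus branches: here I would use the identity $q^2+1 = (q-\sqrt{2q}+1)(q+\sqrt{2q}+1)$ for $q = 2^k$ ($k$ odd), together with the cyclotomic factorisation $(q_0^2)^m+1 = (q_0^2+1)\prod_{d \mid m,\, d > 1}\Phi_{2d}(q_0^2)$ for $m = k/r$ odd and $q_0 = 2^r$, to bound $\O$ of the product $(2^k-\sqrt{2^{k+1}}+1)(2^k+\sqrt{2^{k+1}}+1)$ from below in terms of its level-$r$ analogue. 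Combined with the observation that each cyclic torus of ${}^2B_2(2^r)$ of order $2^r \pm \sqrt{2^{r+1}}+1$ embeds in exactly one of the corresponding tori of ${}^2B_2(2^k)$, this should yield the individual lower bounds on $\O(2^k \pm \sqrt{2^{k+1}}+1)$ needed to match the formula.
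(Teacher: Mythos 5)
Your treatment of ${\rm L}_{2}(2^k)$ is sound. For even $k$ it differs mildly from the paper, which first reduces to $H={\rm L}_{2}(2^{2^a})$ via $\l(G)=\O(b)+\l(H)$ and then locates the last subfield subgroup occurring in a minimal chain for $H$; you instead run the induction directly on $\O(k)$ and dispose of the Borel and dihedral branches at level $k$ with the cyclotomic factorisations of $2^k\pm 1$. That works because each of those factorisations bounds a single quantity: $\O(2^k\pm 1)\geqs \O(2^{2^a}\pm 1)+\O(b)\geqs (M_a+a)+\O(b)=M_a+\O(k)$ is a legitimate sign-by-sign estimate.

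The Suzuki case has a genuine gap at the torus branches. Writing $T_k^{\pm}=2^k\pm\sqrt{2^{k+1}}+1$, what you must show is an \emph{individual} bound $\O(T_k^{\e})\geqs \O(k)-1+\O(T_r^{\d})$ for each sign $\e$, where $T_r^{\d}$ is the level-$r$ torus order dividing $T_k^{\e}$. Your proposed derivation bounds only $\O(T_k^{+}T_k^{-})=\O(2^{2k}+1)$ from below via the factorisation of $(2^{2r})^{m}+1$, and then invokes the embedding of each level-$r$ torus into exactly one level-$k$ torus. That controls the sum $\O(T_k^{+})+\O(T_k^{-})$, but says nothing about how the extra prime factors supplied by the cyclotomic factors $\Phi_{2d}(2^{2r})$, $d\mid m$, $d>1$, distribute between $T_k^{+}$ and $T_k^{-}$: a priori they could all land on one side, and ruling that out would require an Aurifeuillian-type splitting of each such factor, which you neither state nor prove. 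The paper avoids the issue entirely by making only a one-step comparison: since $q_0\pm\sqrt{2q_0}+1$ properly divides $q\mp\sqrt{2q}+1$ for $q_0=2^{k/s}$, one gets $\l({}^2B_2(q_0))\leqs\O(q_0\pm\sqrt{2q_0}+1)+3\leqs\O(q\mp\sqrt{2q}+1)+2=\l((q\mp\sqrt{2q}+1){:}4)$, and likewise the subfield subgroup is at least as good as the Borel and the dihedral; hence at every composite level a subfield subgroup realises the minimum, and the non-subfield branches only ever need to be compared at the prime level. Alternatively, you can obtain the individual bound you want by iterating the proper divisibility $T_{k/s}^{\d'}\mid T_k^{\e}$ along a full chain of subfield levels from $k$ down to $r$, each of the $\O(k)-1$ steps adding at least one prime factor. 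A minor slip: the Suzuki Borel $q^{1+1}{:}(q-1)$ has depth $\O(q-1)+2$, not $\O(q-1)+1$, since the unipotent radical is not elementary abelian; this is harmless for your inequalities.
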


\begin{proof}
First assume $G = {\rm L}_{2}(2^k)$. In view of Proposition \ref{p:l2q}, we may assume $k=2^ab$ is even and $b \geqs 1$ is odd. By arguing as in the proof of Proposition \ref{p:l2q}, we deduce that $\l(G) = \Omega(b) + \l(H)$, where $H = {\rm L}_{2}(2^{2^a})$. Consider a $t$-chain
$$H  = H_0 > H_1 > H_2 > \cdots > H_t = 1$$
and let $s \geqs 0$ be maximal so that $H_s = {\rm L}_{2}(2^{2^c})$ is a subfield subgroup of $H$. Then $c \geqs 1$ and $\l(H) = s + \l(H_s)$. Moreover, $s = \Omega(2^{a-c})=a-c$ and the maximality of $s$ implies that $\l(H_s) = 2 + \min\{\Omega(2^{2^c} \pm 1)\}$, so
$$\l(H) = a-c + 2 + \min\{\Omega(2^{2^c} \pm 1)\}.$$
The result now follows since $\Omega(k) = a+\Omega(b)$.

Now assume $G = {}^2B_2(2^k)$, where $k \geqs 3$ is odd. Set $q=2^k$ and let $H$ be a maximal subgroup of $G$. By \cite{Suz}, $H$ is one of
$$q^{1+1}{:}(q-1),\; D_{2(q-1)},\; (q \pm \sqrt{2q}+1){:}4,\; {}^2B_2(q_0),$$
where $q_0=2^{k/s}$ for a proper prime divisor $s$ of $k$. We have
$$\l(q^{1+1}{:}(q-1)) = \O(q-1)+2,\; \l(D_{2(q-1)}) = \O(q-1)+1$$
and
$$\l((q \pm \sqrt{2q}+1){:}4) = \l(D_{2(q \pm \sqrt{2q}+1)}) +1 = \O(q \pm \sqrt{2q}+1)+2.$$
Similarly,
$$\l({}^2B_2(q_0)) \leqs \l(D_{2(q_0-1)})+1 = \O(q_0-1)+2 \leqs \O(q-1)+1$$
and
$$\l({}^2B_2(q_0)) \leqs \O(q_0 \pm \sqrt{2q_0}+1) +3 \leqs \O(q \mp \sqrt{2q}+1)+2$$
(note that $q_0 \pm \sqrt{2q_0}+1$ divides $q \mp \sqrt{2q}+1$).
Therefore, we can construct an unrefinable chain for $G$ of minimal length by descending via a sequence of $\O(k)-1$ subfield subgroups to ${}^2B_2(2^r)$ for some prime divisor $r$ of $k$. It follows that
$$\l(G) = \O(k)-1 + \min\{\O(2^r-1)+2, \l(2^r \pm \sqrt{2^{r+1}}+1)+3\,:\, r \in \pi(k)\}$$
as required.
\end{proof}

\subsection{Proof of Corollary \ref{c:main5}}

We apply Theorem \ref{t:main4}. Trivially, we have $\O(k) \leqs \log_2{k}$.
So $\l(G) \leqs 3 \O(k) + 36$ implies $\l(G) \leqs 3 \log_2{k} + 36 \leqs f_1(k)$, as required.

Next, suppose conclusion (i) of Theorem \ref{t:main4} holds, namely
$$\l(G) \leqs \O(k) +1+ \min\{\O(2^r - 1) : r \in \pi(k)\}.$$
For each prime divisor $r$ of $k$, each prime $s$ dividing $2^r-1$ satisfies $s \equiv 1 \imod{r}$,
so $s \geqs r+1$.
Hence
$$\O(2^r-1) \leqs \log_2 (2^r-1)/\log_2 (r+1) < r/\log_2 r \leqs k/\log_2 k.$$ This yields
\[
\l(G) \leqs \log_2 k + k/\log_2 k + 1
\]
and the result follows.

Finally, suppose conclusion (ii) of Theorem \ref{t:main4} holds, namely
$$\l(G) \leqs 3\O(k)+2\O(2^{2^a}+1)+35,$$ where $k = 2^a b$ with $a \geqs 1$ and $b$ odd.

Let $s$ be a prime divisor of $2^{2^a}+1$. We claim that
$s \equiv 1 \imod{2^{a+1}}$.
Indeed, let $m$ be the multiplicative order of $2$ modulo $s$. Since $2^{2^a} \equiv -1 \imod s$
we have $2^{2^{a+1}} \equiv 1 \imod s$, so $m$ divides $2^{a+1}$. But $m$ does not divide $2^a$,
hence $m = 2^{a+1}$, so $2^{a+1}$ divides $s-1$, as claimed. Therefore, $s \geqs 2^{a+1} + 1$ and thus
\[
\O(2^{2^a+1} + 1) \leqs \log_2 (2^{2^a}+1) / \log_2 (2^{a+1}+1) < 2^a/(a+1).
\]
This implies that
\begin{align*}
\l(G) < 3\O(k) + 2^{a+1}/(a+1) + 35 & \leqs 3 \log_2 k + (2k/b)/(\log_2(2k/b)) + 35 \\
& \leqs 3 \log_2 k + 2k/\log_2(2k) + 35,
\end{align*}
completing the proof. \hspace *{\fill}$\Box$

\subsection{Proof of Theorem \ref{t:main6}}

Fix $n \geqs 2$ and let $p_1, \ldots, p_{n-1}$ be the first $n-1$ primes which are greater than $5$.
Let $S$ be the set of primes $p$ satisfying  $p \equiv \pm 3, \pm 13 \imod{40}$
and $p \equiv 1 \imod{p_i}$ for $i=1, \ldots, n-1$. By the Chinese Remainder theorem and Dirichlet's theorem, $S$ is infinite.

Let $G = {\rm L}_2(p)$ with $p \in S$.
Then $\l(G) = 3$ by Theorem \ref{t:main1}. On the other hand, the dihedral group $D_{p-1}$ is a subgroup of $G$, so we have
\[
l(G) > l(D_{p-1}) = \O(p-1) \geqs n.
\]
This completes the proof.
\hspace *{\fill}$\Box$

\subsection{Proof of Theorem \ref{t:main7}}

We always have $\l(G) < l(G)$ and $h(l) \geqs 36$, so it suffices to show that if $l(G) \geqs 36$ then $\l(G) < h(l)$.

If $G$ is sporadic then $\l(G) \leqs 6$ by Lemma \ref{spor}, and if $G$ is alternating then by Theorem \ref{t:main2} we have $\l(G) \leqs 23$. Hence Theorem \ref{t:main7} holds for these groups.

It remains to deal with groups of Lie type $G = G_r(p^k)$, where $r$ is
the Lie rank of $G$.
Let $B$ be a Borel subgroup of $G$ and let $P < B$ be a Sylow $p$-subgroup of $G$. Note that any unrefinable chain for $G$ passing through $B$ has length $r+l(B)$. Also note that $l(B) = \O(|B|)$ since $B$ is solvable. Define $u(G)$ by $|P| = (p^k)^{u(G)}$.
Then
\[
l(G) \geqs r + l(B) = r + \O(|B|) = r + \O(|B|/|P|) + \O(|P|) \geqs 2r + \O(|P|) = 2r + ku(G)
\]
and thus
\begin{equation}\label{7.1}
k \leqs (l(G)-2r)/u(G).
\end{equation}

We now use Corollary \ref{c:main5}, its notation and its proof.

In the generic case of Theorem \ref{t:main4} we have
\begin{equation}\label{7.2}
\l(G) < 3 \log_2{k} + 36 \leqs 3 \log_2 (l(G)-2) + 36 \leqs h(l(G)),
\end{equation}
where the last inequality is easily checked numerically, using our assumption that $l(G) \geqs 36$.

In case (i) of Theorem \ref{t:main4} we have
\[
\l(G) \leqs \log_2 k + k/\log_2 k +1 \leqs \log_2(l(G)-2) + \frac{l(G)-2}{\log_2(l(G)-2)} + 1 \leqs h(l(G)).
\]
Finally, in case (ii) we have $G = {\rm U}_n(2^k)$ for odd $n \geqs 3$ and for even $k$, say $k=2m$.
We claim that $k \leqs (l(G)-4)/3$ unless $k=2$ and $G = {\rm U}_3(4)$.

Indeed, if the rank $r$ is at least $2$ then this follows from \eqref{7.1}.
So suppose $r=1$. Then $n=3$ and $|B| = ((2^k)^2-1)(2^k)^3$. If $k > 2$ then
$\O((2^k)^2 - 1) = \Omega(2^{4m}-1) \geqs 3$ (since $m \geqs 2$), which yields
$l(G) \geqs 1 + \Omega(|B|) \geqs 1 + 3 + 3k$, proving the claim.
Note that, by \cite[Theorem 1]{ST} we have $l(G) = 1 + \Omega(|B|)$ in this case.

Combining the above claim with Corollary \ref{c:main5}, we conclude that, if $k > 2$, then
\[
\l(G) < 3 \log_2 k + 2k/\log_2(2k) + 35 = f_1(k) \leqs f_1((l(G)-4)/3) \leqs h(l(G)).
\]
Finally, if $k=2$ then $G = {\rm U}_3(4)$ and $l(G) = 9 < 36$, so the result holds
trivially in this case.

This completes the proof of Theorem \ref{t:main7}. \hspace *{\fill}$\Box$

\vs

In fact similar arguments give rise to better bounds.
In the theorem below we adopt the above notation, and let $o(1)$ denote a number
tending to zero as $l(G) \to \infty$.

\begin{thm}\label{t:3.8}
Let $G = G_r(p^k)$. Then
\begin{itemize}\addtolength{\itemsep}{0.2\baselineskip}
\item[{\rm (i)}] $\l(G) < f_1((l(G)-2r)/u(G))$.
\item[{\rm (ii)}] If $r > 1$, then
$\l(G) \leqs \frac{1+o(1)}{r(r+1/2)} \cdot \frac{l(G)}{\log_2 l(G)}$.
\item[{\rm (iii)}] If $G$ is not as in case (i) or (ii) of Theorem \ref{t:main4}, then
$\l(G) \leqs (3+o(1)) \log_2 l(G)$.
\end{itemize}
\end{thm}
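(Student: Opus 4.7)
\emph{Proof plan for Theorem \ref{t:3.8}.}
The approach is to refine the argument in the proof of Theorem \ref{t:main7}, keeping careful track of the rank $r$ and the Sylow exponent $u(G)$. For part (i), I would simply combine two ingredients already in hand: the inequality $k \leqs (l(G)-2r)/u(G)$, which comes from $l(G) \geqs r + l(B) \geqs 2r + ku(G)$ exactly as in the proof of Theorem \ref{t:main7}, together with $\l(G) < f_1(k)$ from Corollary \ref{c:main5}. Since $f_1$ is increasing on the relevant range, substituting the first inequality into the second gives the desired estimate.

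For part (iii), the hypothesis is that $G$ avoids the two exceptional families listed in Theorem \ref{t:main4}, so the generic bound $\l(G) \leqs 3\O(k) + 36$ applies. Using $\O(k) \leqs \log_2 k$ together with $k \leqs l(G)/u(G) \leqs l(G)$ then yields $\l(G) \leqs 3\log_2 l(G) + 36 = (3+o(1))\log_2 l(G)$ directly.

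Part (ii) is the most delicate, and I would split on which clause of Theorem \ref{t:main4} applies to $G$. Clause (i) of Theorem \ref{t:main4} is automatically excluded, since those groups have Lie rank $1$. If $G$ falls in the generic case, then (iii) above already gives $\l(G) = O(\log l(G))$, comfortably smaller than the target $O(l(G)/\log_2 l(G))$. The remaining case is Theorem \ref{t:main4}(ii), namely $G = {\rm U}_n(2^k)$ with $n$ odd and $k = 2^ab$ even; the hypothesis $r = (n-1)/2 > 1$ forces $n \geqs 5$. The crucial observation is the identity
\[
u(G) = n(n-1)/2 = r(2r+1) = 2r(r+1/2),
\]
which matches exactly the constant appearing in the theorem. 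From the proof of Corollary \ref{c:main5} we have $\O(2^{2^a}+1) < 2^a/(a+1)$, and since $2^a \leqs k$ this yields $2\O(2^{2^a}+1) \leqs (1+o(1)) \cdot 2k/\log_2(2k)$, the worst case occurring at $b=1$; the lower-order $3\O(k)+35$ is absorbed into the $o(1)$. Composing with part (i) and the value $u(G) = 2r(r+1/2)$ then produces the stated bound.

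The main subtlety, and the reason for the peculiar form of the constant $1/(r(r+1/2))$, is precisely the arithmetic coincidence $n(n-1)/2 = 2r(r+1/2)$ for $n=2r+1$. The odd-dimensional unitary groups in Theorem \ref{t:main4}(ii) are the unique family of rank greater than $1$ in which the linear-in-$k$ term $2k/\log_2(2k)$ survives in the upper bound, and their Sylow exponent $u(G)$ has leading coefficient $2r^2$, rather than the $r^2/2$ one would see for an untwisted $A$-type group. This mismatch is exactly what dictates the constant.
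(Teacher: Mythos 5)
Your proposal is correct and follows essentially the same route as the paper: part (i) by combining the inequality $k \leqs (l(G)-2r)/u(G)$ with the bound $\l(G) < f_1(k)$ of Corollary \ref{c:main5}, part (iii) from the generic bound $\l(G) \leqs 3\O(k)+36$, and part (ii) by reducing to the unitary case of Theorem \ref{t:main4}(ii) and exploiting $u(G) = n(n-1)/2 = r(2r+1) = 2r(r+1/2)$. The only difference is that you make explicit some details the paper leaves implicit (the exclusion of the rank-one families from part (ii), and the extraction of the dominant term $2k/\log_2(2k)$ from $f_1$), which does not change the argument.
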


\begin{proof}
Part (i) follows immediately from the proof of Theorem \ref{t:main7}, combined with
inequality \eqref{7.1} above.

Part (iii) follows from inequality \eqref{7.2} above.

Finally, part (ii) follows from part (iii) unless $G = {\rm U}_n(2^k)$, with odd $n$ and even $k$.
In the latter case we have $n = 2r+1$ and $u(G) = n(n-1)/2 = r(2r+1)$, so the result
follows from part (i).
\end{proof}

\vs

In fact it may well be that $\l(G) = O(\log_2 l(G))$ for all finite simple groups $G$.
In view of Theorems \ref{t:main2} and \ref{t:3.8} it suffices to prove it for $G$ as in
case (i) or (ii) of Theorem \ref{t:main4}. This depends on better upper bounds on
$\O(2^r-1)$ for $r$ prime, and on $\O(2^{2^a}+1 )$.

It is known that for most natural numbers $n$ we have $\Omega(n) \sim \log \log n$ (see, for instance, 
\cite[Theorem 431]{HW}). It is reasonable to assume -- though impossible to prove using present methods
of Number Theory -- that $2^r - 1$ ($r$ prime) and $2^{2^a}+1$ are less composite than most numbers. In particular we therefore expect that $\O(2^r-1) \leqs \log \log (2^r - 1) \leqs \log r$ for $r \gg 0$,
and that $\O(2^{2^a} + 1) \leqs \log \log (2^{2^a} + 1) \leqs a$ for $a \gg 0$. Note that this implies
that, for primes $r \gg 0$, the largest prime divisor of $2^r-1$ is at least $(2^r-1)^{1/\log r}$, a bound far
stronger than all known bounds, even assuming the ABC conjecture or the Generalized Riemann Hypothesis
(see, for instance, \cite{MP}). Anyway, plugging our two heuristic assumptions into the proof of
Corollary \ref{c:main5} it would follow that $\l(G(p^k)) = O( \log_2 k )$ in all cases,
and this in turn would yield $\l(G) = O(\log_2 l(G))$.

Finally, note that, in view of the lower bound given in Proposition \ref{p:main8}, our above conjectured upper bound on $\l(G)$ in terms of $l(G)$ would be best possible.

\vs

\end{document}